\numberwithin{equation}{section}
\newtheorem{theorem}{Theorem}[section]  
\newtheorem{theorem?}{``Theorem''}[section]  
\newtheorem{corollary}[theorem]{Corollary}
\newtheorem{proposition}[theorem]{Proposition}
\newtheorem{lemma}[theorem]{Lemma}
\theoremstyle{definition}
\newtheorem{definition}[theorem]{Definition}
\newtheorem{question}{Question}
\newtheorem{problem}[theorem]{Problem}
\theoremstyle{remark}
\newtheorem{remark}[theorem]{Remark}  
\newcommand{\R}{{\mathbb R}}
\newcommand{\C}{{\mathbb C}}
\newcommand{\N}{{\mathbb N}}
\newcommand{\Z}{{\mathbb Z}}
\renewcommand{\a}{\alpha}
\renewcommand{\b}{\beta}
\newcommand{\ord}{{\rm ord}}
\begin{document}
\title[Curves tangent to hypersurfaces]
{
On holomorphic curves tangent to real hypersurfaces 
of infinite type} 
\author{Joe Kamimoto}
\address{Faculty of Mathematics, Kyushu University, 
Motooka 744, Nishi-ku, Fukuoka, 819-0395, Japan} 
\email{
joe@math.kyushu-u.ac.jp}
\email{ }
\keywords{
holomorphic curve, real hypersurface, 
D'Angelo type, Bloom-Graham type, infinite type}
\subjclass[2010]{32F18 (32T25).}
\maketitle


\begin{abstract}
The purpose of this paper is to investigate
the geometric properties of real hypersurfaces
of D'Angelo infinite type in $\C^n$. 
In order to understand the situation 
of flatness of these hypersurfaces, 
it is natural to ask whether 
there exists a nonconstant holomorphic curve
tangent to a given hypersurface to infinite order. 
A sufficient condition for this existence is given
by using Newton polyhedra, which is an important
concept in singularity theory. 
More precisely, 
equivalence conditions are given
in the case of some model hypersurfaces. 
\end{abstract}




\section{Introduction}
Let $M$ be a ($C^\infty$ smooth) real hypersurface in $\C^n$ and  
let $p$ lie on $M$.  
Let $r$ be a local defining function for $M$ 
near $p$ ($\nabla r\neq 0$ when $r=0$).
In \cite{Dan82}, \cite{Dan93}, 
the following invariant is introduced:
\begin{equation}
\Delta_1(M, p) : = \sup_{\gamma\in\Gamma} 
\frac{\ord(r\circ \gamma)}{\ord(\gamma-p)},
\label{eqn:1.1}
\end{equation}
where $\Gamma$ denotes the set of (germs of) nonconstant 
holomorphic mappings 
$\gamma:(\C,0) \to (\C^n,p)$. 
(For a $C^{\infty}$ mapping $h:\C\to\C$ or 
$\C^n$ such that $h(0)=0$, 
let $\ord(h)$ denote the order of vanishing of $h$ at $0$.)
The invariant $\Delta_1(M, p)$ is called 
the {\it D'Angelo type} of $M$ at $p$. 
We say that $M$ is {\it of finite type} at $p$ if 
$\Delta_1(M,p)<\infty$ and {\it of infinite type}
at $p$ otherwise (the latter case will be denoted by 
$\Delta_1(M,p)=\infty$).
The class of finite type plays crucial roles in 
the study of the local regularity in
the $\bar{\partial}$-Neumann problem
over pseudoconvex domains $\Omega$ with smooth 
boundary $\partial\Omega$.
Indeed, it was shown 
by D. Catlin \cite{Cat83}, \cite{Cat87} that
$M=\partial\Omega$ is of finite type at $p$ if and only if 
a local subelliptic estimate at $p$ holds. 
From its importance, 
real hypersurfaces of finite type 
have been deeply investigated from various points of view. 

On the other hand, 
to understand the geometric properties of
real hypersurfaces of infinite type 
is also an interesting subject
in the study of several complex variables.  
These hypersurfaces contain some kind of strong flatness. 
In order to describe the geometric structure of this flatness, 
the situation of contact of holomorphic curves with the
respective hypersurface must be carefully observed.  
In this paper, we mainly consider the following question:
\begin{question}
When does there exist a nonconstant holomorphic curve
$\gamma_{\infty}$
tangent to $M$ at $p$ to infinite order?
\end{question}

Since the condition of the 
desired curve $\gamma_{\infty}$ in Question~1 
can be written as 
\begin{equation}\label{eqn:1.2}
(r\circ\gamma_{\infty})(t)=O(t^N) \quad
\mbox{
as $t\in\C \to 0$, for every $N\in\N$,}
\end{equation} 
the condition $\Delta_1(M,p)=\infty$ 
is necessary for the existence of the curve $\gamma_{\infty}$. 
It has been shown 
in \cite{Lem86}, \cite{Dan93}, \cite{FoS12} that 
when $M$ is real analytic, the above two conditions
are equivalent
(in this case, the curve $\gamma_{\infty}$ 
is contained in $M$). 
Moreover, in the case of smooth $M$,
this equivalence is also shown 
in the formal series sense 
in \cite{FLZ14}, \cite{KiN15}.  
But, in general, the sufficient direction is not true. 
Indeed, 
the nonexistense of the curve $\gamma_{\infty}$
in (\ref{eqn:1.2}) is shown in the case of 
some real hypersurfaces constructed in 
\cite{BlG77}, \cite{KiN15}, \cite{NgC19}, \cite{FoN20}. 
Understanding flatness 
on hypersurfaces of infinite type 
has been recognized to be a delicate issue. 

In this paper, 
in order to investigate the flatness of real hypersurfaces, 
we use not only holomorphic curves but also 
{\it Newton polyhedra} of defining functions, 
which plays important roles 
in singularity theory (cf. \cite{AGV85}, \cite{AGV88}). 
Approach from the viewpoint of singularity theory 
is useful in the study of types 
and there have been many works of the sort
(\cite{McN05}, \cite{Hei08}, \cite{FoS10}, 
\cite{FoS12}, \cite{Kam20}, etc.). 

First, we consider the relationship among many kinds of
infinite type hypersurfaces. 
Let 
$\Gamma^{\rm reg}:=
\{\gamma\in\Gamma:{\rm ord}(\gamma)=1\}$
and define 
$\Delta_1^{\rm reg}(M, p) : = 
\sup_{\gamma\in\Gamma^{\rm reg}} 
\{\ord(r\circ \gamma)\}$,
which is called {\it regular type} of $M$ at $p$.
Note that 
$\Delta_1^{\rm reg}(M, p) \leq \Delta_1(M, p)$.

\begin{proposition}
Let us consider the following eight conditions for 
a real hypersurface $M$ at $p$:
\begin{enumerate}
\item[(1)] $\Delta_1(M, p)=\infty$;
\item[(2)] $\Delta_1^{\rm reg}(M, p)=\infty$;
\item[(3)] 
There exists a $\gamma\in\Gamma^{\rm reg}$ 
tangent to $M$ at $p$ to infinite order;
\item[(4)]
There exists a $\gamma\in\Gamma$ 
tangent to $M$ at $p$ to infinite order;
\item[(5)] 
There exists a holomorphic coordinate 
$(z)=(z_1,\ldots,z_n)$ at $p$ 
such that $p=0$ and 
a defining function $r$ for $M$ on $(z)$ is not {\it convenient} 
(see Section~2);
\item[(6)]
There exists a holomorphic coordinate $(z)$ at $p$
such that $p=0$ and
the Newton polyhedron  
of a defining function
for $M$ on $(z)$ (see Section~2) 
takes the form 
${\mathcal N}_+(r)=
\{(\xi_1,\ldots,\xi_n)\in\R_+^n: \xi_n\geq 1 \}$;
\item[(7)]
The Bloom-Graham type of $M$ at $p$ is infinity
(i.e. there are complex submanifolds of codimension
one tangent to $M$ at $p$
to arbitrarily higher order);
\item[(8)]
$M$ is Levi-flat near $p$.
\end{enumerate}
Then, among the above eight conditions, 
the following implications hold:
\begin{equation*}
\xymatrix{
&&(7)  \ar@{=>}[ld]  &&& \\
(1) \ar@{<=}[r]  & (2) \ar@{<=}[r] & 
(3) \ar@{<=>}[r]  & (5) \ar@{<=}[r] &
(6) \ar@{<=}[r]  \ar@{=>}[llu] & (8).  
 \\
&    (4) \ar@{=>}[lu] \ar@{<=}[ru] &  &  & & 
  }
\end{equation*}
\end{proposition}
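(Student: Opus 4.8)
\subsection*{Proof proposal}

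The plan is to establish the nine implications of the diagram, grouped by difficulty, disposing first of the four tautological ones. Since $\Gamma^{\rm reg}\subset\Gamma$ we get $(3)\Rightarrow(4)$ at once, and since $\Delta_1^{\rm reg}(M,p)\le\Delta_1(M,p)$ we get $(2)\Rightarrow(1)$; moreover any curve $\gamma$ realizing $\ord(r\circ\gamma)=\infty$ (regular or not) forces the relevant supremum in \eqref{eqn:1.1} to be infinite, because $\ord(\gamma-p)$ is a finite positive integer, giving $(3)\Rightarrow(2)$ and $(4)\Rightarrow(1)$. Here and below I read ``tangent to infinite order'' as $\ord(r\circ\gamma)=\infty$, i.e.\ $r\circ\gamma$ flat at $0$, so that I never require $r\circ\gamma\equiv0$ --- the point at which the smooth (rather than real-analytic) category must be treated with care.

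The conceptual core is $(3)\Leftrightarrow(5)$, translating the existence of a distinguished regular curve into a property of the Newton polyhedron. For $(5)\Rightarrow(3)$: if $r$ fails to be convenient in some holomorphic coordinate $(z)$, then $\mathcal{N}_+(r)$ misses some axis, say the $\xi_j$-axis, which by the definition of the Newton polyhedron means that every pure $(z_j,\bar z_j)$ Taylor coefficient of $r$ vanishes; hence the restriction of $r$ to the $z_j$-axis is flat at $0$, and the regular curve $\gamma(t)=te_j$ along the $z_j$-axis satisfies $\ord(r\circ\gamma)=\infty$, which is $(3)$. For $(3)\Rightarrow(5)$: given $\gamma\in\Gamma^{\rm reg}$ with $\ord(r\circ\gamma)=\infty$, I would straighten it --- since $\gamma'(0)\neq0$, a biholomorphic change of coordinates carries the image of $\gamma$ onto the $z_n$-axis --- and then, order of contact being a biholomorphic invariant, the new defining function $\tilde r$ is flat along the $z_n$-axis, so all its pure $(z_n,\bar z_n)$ coefficients vanish, $\mathcal{N}_+(\tilde r)$ misses the $\xi_n$-axis, and $\tilde r$ is not convenient.

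For the remaining arrows: $(6)\Rightarrow(5)$ is immediate, since $\mathcal{N}_+(r)=\{\xi_n\ge1\}$ meets none of the axes $\xi_1,\dots,\xi_{n-1}$. For $(6)\Rightarrow(7)$, the same normal form shows that every Taylor monomial of $r$ carries positive degree in $(z_n,\bar z_n)$, so $r$ is flat along the complex hypersurface $\{z_n=0\}$ through $p$, which is therefore tangent to $M$ to infinite order and witnesses infinite Bloom--Graham type. For $(7)\Rightarrow(2)$, given for each $N$ a complex hypersurface $V_N$ through $p$ with $r|_{V_N}$ vanishing to order $\ge N$, I take any regular curve $\gamma_N\in\Gamma^{\rm reg}$ with image in $V_N$; then $\ord(r\circ\gamma_N)\ge N$, whence $\Delta_1^{\rm reg}(M,p)=\infty$. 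Finally $(8)\Rightarrow(6)$ rests on the local structure theorem for smooth Levi-flat hypersurfaces, which furnishes holomorphic coordinates in which $M=\{\operatorname{Im}z_n=0\}$; then $r=\tfrac1{2i}(z_n-\bar z_n)$ and $\mathcal{N}_+(r)=\{\xi_n\ge1\}$ exactly.

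I expect the main difficulty to be concentrated in the passage between the geometric and combinatorial descriptions, and to be of two kinds. First is the smooth-category subtlety already noted: the Newton polyhedron sees only the Taylor series, so every assertion must be phrased through order of vanishing rather than genuine vanishing, and in $(3)\Rightarrow(5)$ one must confirm that flatness of $r$ along $\gamma$ is preserved by the straightening biholomorphism. Second, $(8)\Rightarrow(6)$ depends on citing precisely the normalization of a smooth Levi-flat hypersurface to the flat model by a \emph{holomorphic} (not merely CR-diffeomorphic) coordinate change; once these two points are secured, the remainder of the diagram is formal bookkeeping.
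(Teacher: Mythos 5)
Your proposal is correct and follows essentially the same route as the paper: your equivalence $(3)\Leftrightarrow(5)$ via straightening a regular curve onto a coordinate axis is exactly the content of the paper's Lemma~4.1, your flatness argument for $(6)\Rightarrow(7)$ matches Lemma~4.2, and your direct curve-in-$V_N$ argument for $(7)\Rightarrow(2)$ is the argument the paper imports by citing Forn\ae ss--Ninh. The only difference is expository: you prove self-containedly (including the Cartan normal form for $(8)\Rightarrow(6)$) what the paper delegates to its two lemmas and the reference.
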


The proof of the above proposition will be given in Section~4.1.

\begin{remark}
In the above proposition, 
for each implication with only one direction,
its opposite direction is not true
(see Remark~4.2 in Section~4 for details). 
\end{remark}

The following theorem gives a sufficient condition for 
the existence of the curve $\gamma_{\infty}$ in Question~1.
This condition is described by using Newton polyhedra
of defining functions for real hypersurfaces. 

\begin{theorem}
If there exists an ${\mathcal N}$-canonical coordinate $(z)$ for $M$ at $p$, 
then the five conditions {\rm (1)--(5)} are equivalent. 
\end{theorem}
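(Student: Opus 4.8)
The plan is to reduce the five-fold equivalence to a single new implication and then to establish that implication by a Newton-polyhedron finite-type estimate. Observe first that, among the conditions (1)--(5), the preceding Proposition already supplies $(5)\Leftrightarrow(3)$, $(3)\Rightarrow(2)\Rightarrow(1)$, and $(3)\Rightarrow(4)\Rightarrow(1)$; the last two use only that $\Gamma^{\rm reg}\subset\Gamma$ and that any curve of infinite tangency forces $\Delta_1(M,p)=\infty$. Consequently each of (2),(3),(4),(5) already implies (1). Thus the whole statement follows once we prove the remaining arrow $(1)\Rightarrow(5)$: for then $(1)\Rightarrow(5)\Leftrightarrow(3)$ gives $(1)\Rightarrow(3)$, whence (1) implies (2),(4),(5) as well through $(3)\Rightarrow(2)$, $(3)\Rightarrow(4)$, $(3)\Leftrightarrow(5)$; since each of (2),(3),(4),(5) in turn implies (1), all five become equivalent. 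In particular (4) is genuinely part of the cycle, tied in via $(3)\Rightarrow(4)\Rightarrow(1)\Rightarrow(3)$, and not merely appended.

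To prove $(1)\Rightarrow(5)$ I would work throughout in the given $\mathcal{N}$-canonical coordinate $(z)$ with its associated defining function $r$, and argue by contraposition in this fixed coordinate: \emph{if $r$ is convenient, then $\Delta_1(M,p)<\infty$}. Since the $\mathcal{N}$-canonical coordinate is itself a witnessing coordinate for (5), this contrapositive shows that $\Delta_1(M,p)=\infty$ forces $r$ to be non-convenient in $(z)$, which is exactly condition (5). The role of the hypothesis is precisely that in an $\mathcal{N}$-canonical coordinate the nondegeneracy built into the normal form lets the Newton polyhedron $\mathcal{N}_+(r)$ faithfully control the type, so that the analytic quantity $\ord(r\circ\gamma)/\ord(\gamma-p)$ is governed by combinatorial data of $\mathcal{N}_+(r)$. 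This is what repairs the failure, noted in the introduction, of the implication $(1)\Rightarrow(4)$ for general smooth $M$.

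Concretely, I would attach to each compact face of $\mathcal{N}_+(r)$ its supporting weight $a=(a_1,\dots,a_n)$ and, for an arbitrary $\gamma\in\Gamma$ with $\ord(\gamma-p)=m$, expand $r\circ\gamma$ by grouping monomials according to these weights. Convenience of $r$ means $\mathcal{N}_+(r)$ meets every coordinate axis, so no coordinate direction is invisible to the polyhedron; the nondegeneracy of the $\mathcal{N}$-canonical normal form then guarantees that, on the face selected by $\gamma$, the leading $a$-weighted part of $r$ does not vanish along $\gamma$. Hence $\ord(r\circ\gamma)$ equals the predicted weighted order and in particular obeys $\ord(r\circ\gamma)\le C\cdot\ord(\gamma-p)$ with a constant $C$ (of Newton-distance type) depending only on $\mathcal{N}_+(r)$, uniformly over $\gamma$; thus $\Delta_1(M,p)\le C<\infty$.

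The hard part will be the nonvanishing-of-the-leading-part step: controlling the possible cancellation among lowest-order terms of $r\circ\gamma$ for a completely arbitrary, possibly highly singular, curve $\gamma$. A general $\gamma$ may align with several faces at once and its Puiseux data may conspire to annihilate the naive leading term, so one must choose the weight $a$ adapted to $\gamma$ (the face minimizing the $a$-order of $\gamma$) and then invoke the $\mathcal{N}$-canonical nondegeneracy to ensure the corresponding weighted-homogeneous part of $r$ survives restriction to $\gamma$. Handling all of $\Gamma$ uniformly, rather than only $\Gamma^{\rm reg}$, is where the argument is most delicate; once the uniform bound is in place, the contrapositive and hence $(1)\Rightarrow(5)$ follow, closing the equivalence of (1)--(5).
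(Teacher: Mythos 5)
Your reduction is exactly the paper's: Proposition~1.1 already gives $(3)\Leftrightarrow(5)$, $(3)\Rightarrow(2)\Rightarrow(1)$ and $(3)\Rightarrow(4)\Rightarrow(1)$, so the whole theorem rests on the single arrow $(1)\Rightarrow(5)$, and your observation that the ${\mathcal N}$-canonical coordinate itself serves as the witnessing coordinate for (5) is also how the paper argues. The divergence is in how that arrow is closed. The paper's proof is two lines: by Theorem~2.3 (recalled in Section~2 from \cite{Kam20}), the existence of an ${\mathcal N}$-canonical coordinate forces $\Delta_1(M,p)=\Delta_1^{\rm reg}(M,p)=\rho_1(M,p;(z))$; hence $\Delta_1(M,p)=\infty$ gives $\rho_1(M,p;(z))=\infty$, i.e.\ $r$ is not convenient on $(z)$, which is (5) (via Lemma~4.1). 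Your contrapositive ``convenient $\Rightarrow$ finite type in the canonical coordinate'' is precisely the nontrivial inequality $\Delta_1(M,p)\leq\rho_1(M,p;(z))$ of that theorem, with $C=\rho_1(M,p;(z))$ the maximal axis intercept; you do not need to reprove it.

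The genuine gap is that you set out to reprove it anyway and leave the decisive step unexecuted. You write that one should ``invoke the ${\mathcal N}$-canonical nondegeneracy to ensure the corresponding weighted-homogeneous part of $r$ survives restriction to $\gamma$,'' but as defined the nondegeneracy condition only asserts $r_\kappa\circ\gamma\not\equiv 0$ for monomial curves $\gamma\in\Gamma_\kappa$ with \emph{all} coefficients $c_j\neq 0$. An arbitrary $\gamma\in\Gamma$ may have components that vanish identically (so its leading monomial data lies outside $(\C\setminus\{0\})^n$ and one must pass to coordinate subspaces and non-compact faces of ${\mathcal N}_+(r)$), and its higher-order terms can cancel against the face part, so the claim that ``$\ord(r\circ\gamma)$ equals the predicted weighted order'' does not follow from the definition alone; establishing it uniformly over all singular curves is the actual content of \cite{Kam20} (compare the use of its Theorems~7.3 and~8.4 in Section~6, and note Theorem~6.6, where degenerate curves of the shape $(t^2,\pm t^3+o(t^3),\cdot)$ are exactly the ones escaping the naive face estimate). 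Your own text flags this as ``the hard part'' and then assumes it; as written, the proof is therefore incomplete unless you either cite Theorem~2.3, which is available in the paper and closes the argument immediately, or actually supply the omitted cancellation analysis.
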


The definition of ${\mathcal N}$-{\it canonical coordinates} will 
be given in Section 2 (Definition 2.3). 
The proof of the above theorem will be given in Section~4.2. 

The following corollary can be directly obtained 
from Theorem~1.3.

\begin{corollary}
Suppose that $\Delta_1(M,p)=\infty$. 
If there is no $\gamma\in\Gamma$ 
tangent to $M$ at $p$ to infinite order, then 
$M$ does not admit any ${\mathcal N}$-canonical coordinates at $p$.
\end{corollary}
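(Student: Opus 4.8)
The plan is to argue by contraposition, treating the corollary as an immediate logical consequence of Theorem~1.3. The corollary asserts that the simultaneous validity of $\Delta_1(M,p)=\infty$ together with the nonexistence of a curve $\gamma\in\Gamma$ tangent to $M$ at $p$ to infinite order forces $M$ to admit no $\mathcal{N}$-canonical coordinate at $p$. In the language of the eight conditions of Proposition~1.1, the two hypotheses say precisely that condition~(1) holds while condition~(4) fails.

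First I would suppose, toward a contradiction, that $M$ \emph{does} admit an $\mathcal{N}$-canonical coordinate $(z)$ at $p$. Then Theorem~1.3 applies verbatim and tells us that the five conditions (1)--(5) are mutually equivalent at $p$; in particular the equivalence $(1)\Leftrightarrow(4)$ holds. Since the standing hypothesis $\Delta_1(M,p)=\infty$ is exactly condition~(1), this equivalence yields condition~(4), i.e. the existence of a nonconstant holomorphic curve $\gamma\in\Gamma$ tangent to $M$ at $p$ to infinite order.

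This directly contradicts the remaining hypothesis, namely that no such curve exists. Hence the assumption that $M$ admits an $\mathcal{N}$-canonical coordinate at $p$ is untenable, and the corollary follows.

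As for the main obstacle: there is genuinely none at the level of this corollary, which is why the statement is recorded as being obtained directly from Theorem~1.3. All of the substantive work---verifying that the presence of an $\mathcal{N}$-canonical coordinate upgrades the one-directional chain of implications of Proposition~1.1 into the full equivalence of (1)--(5)---is already packaged inside Theorem~1.3. The only care required here is bookkeeping: matching the two hypotheses of the corollary with conditions~(1) and $\neg$(4), and invoking the precise equivalence $(1)\Leftrightarrow(4)$ furnished by the theorem rather than any of the merely one-directional implications of Proposition~1.1, whose converses fail in general by Remark~1.2.
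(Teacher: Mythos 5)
Your proposal is correct and matches the paper exactly: the paper gives no separate argument, stating only that the corollary ``can be directly obtained from Theorem~1.3,'' which is precisely your contrapositive reading (an $\mathcal{N}$-canonical coordinate plus condition (1) would force condition (4) via the equivalence in Theorem~1.3, contradicting the nonexistence hypothesis). Your bookkeeping of the hypotheses as (1) and $\neg$(4) is the intended reasoning.
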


It is easy to check that the examples of hypersurface
constructed in 
\cite{BlG77}, \cite{KiN15}, \cite{NgC19}, \cite{FoN20}
do not admit any ${\mathcal N}$-canonical coordinates.
More exactly, we will give equivalence conditions 
in more restricted cases in Sections 5 and 6.  

Next, it is seen in \cite{Kam20}
that when $M$ is the boundary of pseudoconvex 
Reinhardt domains, $M$ always admits 
an ${\mathcal N}$-canonical coordinate. 
Therefore, Theorem~1.3 implies the following. 

\begin{corollary}
Let $M$ be the boundary of pseudoconvex
Reinhardt domains (with smooth boundary) 
and let $p$ lie in $M$.
If $\Delta_1(M,p)=\infty$, then
there exists a $\gamma\in\Gamma^{\rm reg}$ 
tangent to $M$ at $p$ to infinite order. 
\end{corollary}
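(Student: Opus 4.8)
The plan is to deduce this corollary directly from Theorem~1.3, so that the entire argument reduces to checking the single hypothesis of that theorem in the Reinhardt setting and then reading off the conclusion from the chain of equivalences.

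First I would invoke the result of \cite{Kam20} recalled immediately before the statement: when $M$ is the smooth boundary of a pseudoconvex Reinhardt domain and $p\in M$, the hypersurface $M$ admits an $\mathcal{N}$-canonical coordinate at $p$. Granting this, the hypothesis of Theorem~1.3 is met, so the five conditions (1)--(5) are equivalent at $p$. The assumption of the corollary, $\Delta_1(M,p)=\infty$, is exactly condition~(1); hence condition~(3) holds as well, namely there exists a $\gamma\in\Gamma^{\rm reg}$ tangent to $M$ at $p$ to infinite order. This is precisely the desired conclusion, so modulo the cited fact the proof is complete in essentially one line.

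The genuine content lies in the appeal to \cite{Kam20}, and if I had to reproduce that step the main obstacle would be constructing the $\mathcal{N}$-canonical coordinate intrinsically. The features to exploit are that a Reinhardt defining function depends only on the moduli $|z_1|,\dots,|z_n|$ through its torus invariance, so in the variables $|z_j|^2$ it acquires a monomial-governed expansion whose Newton polyhedron is well defined, while pseudoconvexity forces the associated region in logarithmic coordinates $\log|z_j|$ to be convex. This log-convexity is what should ensure that the adapted coordinates are already $\mathcal{N}$-canonical, the convexity ruling out the kind of mixed lower-order interference that would otherwise spoil the canonical normal form.

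The subtle point I would watch for is the position of $p$: the torus symmetry is anchored at the origin of $\C^n$, so once we recenter at a boundary point $p$ (as is required in order to compute $\Delta_1(M,p)$) the full symmetry is broken, and the clean monomial picture is available only after careful bookkeeping — cleanest when $p$ avoids all the coordinate hyperplanes $\{z_j=0\}$, and demanding extra care along those hyperplanes. Once this is handled, as carried out in \cite{Kam20}, the existence of the $\mathcal{N}$-canonical coordinate, and hence the corollary, follow.
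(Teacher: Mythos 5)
Your proof is correct and takes exactly the paper's route: the paper likewise deduces the corollary by citing \cite{Kam20} for the existence of an $\mathcal{N}$-canonical coordinate on the boundary of a pseudoconvex Reinhardt domain and then applying Theorem~1.3, so that condition (1), namely $\Delta_1(M,p)=\infty$, yields condition (3). Your closing speculation about how the cited fact is proved (torus invariance, log-convexity, care at the coordinate hyperplanes) is reasonable background but plays no role in the argument, which, as you note, is one line modulo \cite{Kam20}.
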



This paper is organized as follows. 
In Section~2, we recall the concepts: 
Newton polyhedra, ${\mathcal N}$-nondegeneracy condition 
and ${\mathcal N}$-canonical coordinates, 
which were introduced in \cite{Kam20}.
In Section~3, for the analysis later, 
we prepare appropriate coordinates on which
hypersurfaces are expressed in clear form.
Section~4 is devoted to the proof of results given in the Introduction.
More precise results are given in 
the two dimensional case in Section~5 and the higher dimensional 
case under the Bloom-Graham infinity type assumption in Section 6. 
Since the Bloom-Graham type is the same as the D'Angelo type
in the two-dimesional case, 
some results in Section~5 can be considered as special cases
of those in Section~6. 
But, they are separately explained to
make clear their difference. 
Lastly, we consider open problems in Section~7. 

{\it Notation, symbols and terminology.}\quad
\begin{itemize}
\item 
We denote 
$\Z_+:=\{n\in\Z:n\geq 0\}$ and 
$\R_+:=\{x\in\R:x \geq 0\}.$
\item
The multi-indices are used as follows.
For $z=(z_1,\ldots,z_n), \,\,
\bar{z}=(\bar{z}_1,\ldots,\bar{z}_n), 
\in\C^n$, 
$\a=(\a_1,\ldots,\a_n), \b=(\b_1,\ldots,\b_n)\in\Z_+^n$, 
define
\begin{eqnarray*}
&& 
z^{\a}:=z_1^{\a_1}\cdots z_n^{\a_n}, \,\,
\bar{z}^{\b}:=\bar{z}_1^{\b_1}\ldots\bar{z}_n^{\b_n}, \\
&&
|\a|:=\a_1+\cdots+\a_n, \quad
\a!:=\a_1 !\cdots \a_n!, \quad
0 !:=1, \\ 
&&
D^{\alpha}:=
\frac{\partial^{|\alpha|}}
{\partial z_1^{\alpha_1}\cdots \partial z_n^{\alpha_n}}, \quad
\bar{D}^{\beta}:=
\frac{{\partial}^{|\beta|}}
{\partial \bar{z}_1^{\beta_1}\cdots \partial \bar{z}_n^{\beta_n}}.
\end{eqnarray*}
\item
We always consider smooth functions, 
mappings, real hypersurfaces and complex curves 
as their respective germs without any mentioning. 
The following rings of germs of $\C$-valued functions 
are considered: 
\begin{itemize}
\item 
$C^{\infty}_0(\C^n)$ 
is the ring of germs of $C^{\infty}$ functions 
at the origin in $\C^n$. 
\item 
${\mathcal O}_0(\C)$ is the ring of germs of 
holomorphic functions at the origin in $\C$.
\end{itemize}
\item
We always take a {\it good} parametrization 
for corves $\gamma\in\Gamma$ 
without any mentioning. 
That is to say, a point on the curve, defined 
by $t\mapsto \gamma(t)$, 
corresponds to only one value of $t$. 
For example, $(t,t^2)$ is good, but 
$(t^2,t^4)$ is not good. 
\item
We use the words {\it pure terms} for any harmonic
polynomial and {\it mixed terms}
for any sum of monomials that are
neither holomorphic nor anti-holomorphic. 
\end{itemize}


\section{Newton polyhedra for real hypersurfaces}

Let us define the Newton polyhedron of 
a real-valued smooth function $F$ defined near the origin 
in $\C^n$.  
The Taylor series expansion of $F$ at the origin is 
\begin{equation}\label{eqn:2.1}
F(z,\bar{z}) \sim \sum_{\a,\, \b \in \Z_+^n} 
C_{\a \b} z^\a \bar{z}^{\b} \quad \mbox{ with 
$C_{\a \b} = \dfrac{1}{\a ! \b !} 
D^{\alpha}\bar{D}^{\beta}F(0,0)$.}
\end{equation}
The {\it Newton polyhedron} of $F$ is defined by 
\begin{equation*}
{\mathcal N}_+(F) = 
\mbox{The convex hull of }
\left(\bigcup_{\a+\b\in S(F)}(\a+\b + \R_{+}^n)\right),
\end{equation*}
where $S(F)=\{\a+\b\in\Z_+^n:C_{\a \b}\neq 0\}.$
The {\it Newton diagram} ${\mathcal N}(F)$ of $F$ is defined to be 
the union of the compact faces of ${\mathcal N}_+(F)$.
We use coordinates $(\xi)=(\xi_1,\ldots,\xi_n)$ for points in the plane
containing the Newton polyhedron.  
The following classes of functions $F$ 
simply characterized by using their Newton polyhedra 
often appear in this paper:
\begin{itemize}
\item
$F$ is called to be {\it flat} at $0$
if ${\mathcal N}_+(F)$ is an empty set.
\item
$F$ is called to be {\it convenient} at $0$
if ${\mathcal N}_+(F)$ meets every coordinate axis.
\end{itemize}

Let $(z)=(z_1,\ldots,z_n)$ be a holomorphic coordinate
around $p$ such that $p=0$. 
Let $r$ be a local defining function for $M$ near $p$
on the coordinate $(z)$. 
For a given tuple $(M,p;(z))$, we define
a quantity $\rho_1(M,p;(z))\in\Z_+ \cup\{\infty\}$ as follows. 
If $r$ is convenient, then let 
\begin{equation*}\label{eqn:}
\rho_1(M,p;(z)):=\max\{\rho_j(r):j=1,\ldots,n\},
\end{equation*}
where  $\rho_j(r)$ is the coordinate of 
the point at which the Newton diagram ${\mathcal N}(r)$ 
intersects the $\xi_j$-axis.
Otherwise, let $\rho_1(M,p;(z)):=\infty$.
We remark that $\rho_1(M,p;(z))$ depends on 
the chosen coordinate $(z)$, but it is independent of 
the choice of defining functions after fixing a coordinate.
Considering the curves 
$\gamma_j(t)=(0, \ldots, \stackrel{(j)}{t}, \ldots, 0)
\in\Gamma^{\rm reg}$ for $j=1,\ldots,n$,
we can see that the inequality
$
\rho_1(M,p;(z))\leq \Delta_1^{\rm reg}(M,p)
$
always holds.

Next, let us introduce an important concept 
``${\mathcal N}$-nondegeneracy condition''
on a smooth function $F$ defined near the origin 
in $\C^n$. 

Let $\kappa$ be a compact face of ${\mathcal N}_+(F)$.
The $\kappa$-{\it part of} $F$ is the polynomial defined by 
\begin{equation}\label{eqn:2.2}
F_{\kappa}(z,\bar{z})=
\sum_{\a+\b \in \kappa} 
C_{\a \b} z^\a \bar{z}^{\b}.
\end{equation}
The set of holomorphic curves ${\Gamma}_{\kappa}$ 
is defined by
\begin{equation*}\label{eqn:1.7}
\begin{split}
{\Gamma}_{\kappa}:=
\{(c_1 t^{a_1},\ldots,c_n t^{a_n}): 
c\in(\C\setminus\{0\})^n, t\in\C, 
\mbox{ $a\in\N^n$ determines $\kappa$ }\},
\end{split}
\end{equation*}
where
$c=(c_1,\ldots,c_n)\in(\C\setminus\{0\})^n$, 
$a=(a_1,\ldots,a_n)\in\N^n$ and 
``$a\in\N^n$ determines $\kappa$'' means that 
the set
$\{\xi\in{\mathcal N}_+(F):
\sum_{j=1}^n a_j \xi_j =l\}$ 
coincides with the face $\kappa$ 
for some $l\in\N$.
\begin{definition}
Let $\kappa$ be a compact face of ${\mathcal N}_+(F)$.
The $\kappa$-part $F_{\kappa}$ of $F$ 
is said to be {\it ${\mathcal N}$-nondegenerate} if 
\begin{equation*}\label{eqn:1.8}
F_{\kappa}\circ \gamma \not\equiv 0 
\quad 
\mbox{ for any $\gamma\in {\Gamma}_{\kappa}$}.
\end{equation*}
A function $F$ is said to be {\it ${\mathcal N}$-nondegenerate}, 
if $F_{\kappa}$ is ${\mathcal N}$-nondegenerate 
for every compact face 
$\kappa$ of ${\mathcal N}_+(F)$. 
\end{definition}
The above concept is analogous to the nondegeneracy 
condition introduced by Kouchnirenko \cite{Kou76}, 
which plays important roles in the singularity theory. 
Detailed properties of this condition 
are explained in \cite{Kam20}.

\begin{definition}
A holomorphic coordinate $(z)$ at $p$ is said to be {\it canonical}
for $M$ at $p$ if a local defining function $r$ for $M$
on $(z)$ is ${\mathcal N}$-nondegenerate.
\end{definition}

The following relationship 
\begin{equation}\label{eqn:2.3}
\Delta_1(M, p) \geq \Delta_1^{{\rm reg}}(M, p)
\geq \rho_1(M,p;(z))
\end{equation}
is always established for every coordinate $(z)$ at $p$. 
The following theorem shows that the equalities 
in (\ref{eqn:2.3}) are satisfied under 
the ${\mathcal N}$-nondegeneracy condition.
\begin{theorem}[\cite{Kam20}]
\label{thm:2.3}
If there exists an ${\mathcal N}$-canonical coordinate $(z)$ at $p$, 
then the following equalities hold: 
\begin{equation}\label{eqn:2.4}
\Delta_1(M, p) = \Delta_1^{{\rm reg}}(M, p)=\rho_1(M,p;(z)).
\end{equation}
\end{theorem}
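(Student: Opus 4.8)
The plan is to prove the single missing inequality $\Delta_1(M,p)\le\rho_1(M,p;(z))$; since the chain $\Delta_1(M,p)\ge\Delta_1^{\rm reg}(M,p)\ge\rho_1(M,p;(z))$ of (\ref{eqn:2.3}) is already available, this forces all three quantities to coincide. The non-convenient case is immediate: if ${\mathcal N}_+(r)$ misses the $\xi_{j_0}$-axis, then every pure Taylor coefficient of $r$ in $z_{j_0},\bar z_{j_0}$ vanishes, so $r(0,\dots,\stackrel{(j_0)}{t},\dots,0)$ is flat, the regular curve $\gamma_{j_0}$ gives $\ord(r\circ\gamma_{j_0})=\infty$, and all three invariants equal $\infty$. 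I therefore assume $r$ convenient, so that $\rho_1(M,p;(z))=\max_j\rho_j<\infty$ and $\rho_j e_j\in{\mathcal N}_+(r)$ for every $j$.

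The core step is to read off $\ord(r\circ\gamma)$ from the Newton polyhedron. Fix a nonconstant $\gamma$; because the ratio $\ord(r\circ\gamma)/\ord(\gamma)$ is invariant under reparametrization $t\mapsto t^k$, the convention of good parametrizations costs nothing. Writing $\gamma_j=c_jt^{a_j}(1+o(1))$ with $c_j\neq0$ for the nonvanishing components and substituting into (\ref{eqn:2.1}), each monomial $z^{\a}\bar z^{\b}$ contributes a term of order $\langle a,\a+\b\rangle$ in $|t|$, and the terms of least order are exactly those with $\a+\b$ on the face $\kappa$ of ${\mathcal N}_+(r)$ minimizing the functional $\langle a,\cdot\rangle$; since $a$ has strictly positive entries, this $\kappa$ is compact. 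Their sum is precisely $r_\kappa\circ\gamma_{\rm lead}$, homogeneous of degree $d(a):=\min_{\xi\in{\mathcal N}_+(r)}\langle a,\xi\rangle$ in $|t|$. Putting $t=\rho e^{i\theta}$, it equals $\rho^{d(a)}$ times a trigonometric polynomial in $\theta$, hence is nonzero exactly when $r_\kappa\circ\gamma_{\rm lead}\not\equiv0$; this is what the ${\mathcal N}$-nondegeneracy hypothesis guarantees. Therefore $\ord(r\circ\gamma)=d(a)$, with no cancellation.

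The inequality now reduces to elementary convex geometry. Convenience gives $\rho_j e_j\in{\mathcal N}_+(r)$, so $d(a)\le\langle a,\rho_j e_j\rangle=a_j\rho_j$ for every $j$. Choosing $j_0$ with $a_{j_0}=\ord(\gamma)$ yields
\[
\frac{\ord(r\circ\gamma)}{\ord(\gamma)}=\frac{d(a)}{a_{j_0}}\le\rho_{j_0}\le\max_j\rho_j=\rho_1(M,p;(z)).
\]
Taking the supremum over $\gamma$ gives $\Delta_1(M,p)\le\rho_1(M,p;(z))$, and equality is witnessed by the axis curves $\gamma_j$, for which the same computation gives $\ord(r\circ\gamma_j)=\rho_j$.

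The step I expect to be delicate is the presence of curves with identically vanishing components, i.e. $I:=\{j:\gamma_j\not\equiv0\}\subsetneq\{1,\dots,n\}$. For these one has $r\circ\gamma=\tilde r\circ\tilde\gamma$, where $\tilde r$ is the restriction of $r$ to $\{z_j=0:j\notin I\}$ and ${\mathcal N}_+(\tilde r)={\mathcal N}_+(r)\cap\R^I$; the leading-term computation then refers to faces of ${\mathcal N}_+(\tilde r)$, and the reduced curve $\gamma_{\rm lead}$ has fewer nonzero coordinates than the definition of ${\Gamma}_\kappa$ requires. Thus the argument hinges on ${\mathcal N}$-nondegeneracy of $r$ being inherited by every coordinate restriction $\tilde r$, precisely the stability of the nondegeneracy condition established in \cite{Kam20}. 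Granting this, the two middle paragraphs apply verbatim with $(r,{\mathcal N}_+(r))$ replaced by $(\tilde r,{\mathcal N}_+(\tilde r))$; and since $\rho_j e_j\in{\mathcal N}_+(\tilde r)$ for $j\in I$ by convenience, the bound $d(a)/\ord(\gamma)\le\rho_1(M,p;(z))$ follows as before.
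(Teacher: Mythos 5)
You should first note a structural point: this paper contains no proof of Theorem~2.3 at all --- the statement is imported from \cite{Kam20}, so there is no internal argument to compare yours against. Judged on its own merits, your reconstruction is the natural one and is essentially sound. The non-convenient case is handled correctly (and needs no nondegeneracy). The core computation is right: for a curve with leading data $\gamma_j=c_jt^{a_j}(1+o(1))$, $c_j\neq 0$, the degree-$d(a)$ part of $r\circ\gamma$ in $(t,\bar t)$ is exactly $r_\kappa\circ\gamma_{\rm lead}$, where $\kappa$ is the (compact, since $a>0$) face minimizing $\langle a,\cdot\rangle$; the monomials $t^p\bar t^q$ with $p+q=d(a)$ are linearly independent, so ${\mathcal N}$-nondegeneracy forces $\ord(r\circ\gamma)=d(a)$, and smoothness of $r$ is harmless because one can replace $r$ by a Taylor jet of degree $N$ with $(N+1)\min_j a_j>d(a)$. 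The convexity step $d(a)\le\langle a,\rho_{j_0}e_{j_0}\rangle=a_{j_0}\rho_{j_0}$ then gives $\Delta_1(M,p)\le\rho_1(M,p;(z))$, which with \eqref{eqn:2.3} closes the chain; and your remark that the axis curves give $\ord(r\circ\gamma_j)=\rho_j$ is correct without any nondegeneracy (pure-$j$ monomials of a fixed total degree cannot cancel), which is exactly how the paper itself justifies $\rho_1\le\Delta_1^{\rm reg}$ in Section~2.

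The one genuine issue is the step you flagged yourself: curves with $I:=\{j:\gamma_j\not\equiv 0\}\subsetneq\{1,\dots,n\}$. This is a real gap as written, because $\Gamma_\kappa$ quantifies only over curves with \emph{all} $c_j\neq 0$, so ${\mathcal N}$-nondegeneracy of $r$ does not formally say anything about the restricted leading curve; and citing \cite{Kam20} for the needed stability is circular in spirit, since the whole theorem lives there. Fortunately the gap closes inside your own framework. Your identity ${\mathcal N}_+(\tilde r)={\mathcal N}_+(r)\cap\R^I$ is correct (a convex combination of points of $\R_+^n$ lying in $\R^I$ can only use points of $\R^I$). Now given $a'\in\N^I$ determining a compact face $\tilde\kappa$ of ${\mathcal N}_+(\tilde r)$, extend it to $a=(a',L,\dots,L)\in\N^n$ with $L$ large: the minimum of $\langle a,\cdot\rangle$ over ${\mathcal N}_+(r)$ is attained on vertices, every vertex outside $\R^I$ has value at least $L\delta$ with $\delta>0$ fixed, while vertices in $\R^I$ have $L$-independent values; hence for $L$ large the minimizing compact face $\kappa$ of $a$ lies in $\R^I$ and equals $\tilde\kappa$, so $r_\kappa=\tilde r_{\tilde\kappa}$ involves only the variables $z_j$ with $j\in I$. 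If $\tilde r_{\tilde\kappa}\circ\tilde\gamma_{\rm lead}\equiv 0$, padding $\tilde\gamma_{\rm lead}$ with components $c_jt^{L}$, $c_j\neq 0$ for $j\notin I$, produces a curve in $\Gamma_\kappa$ annihilating $r_\kappa$, contradicting ${\mathcal N}$-nondegeneracy. With this lemma supplied, your two middle paragraphs indeed apply verbatim (note $j_0\in I$ and $\rho_je_j\in{\mathcal N}_+(\tilde r)$ for $j\in I$ by convenience), and the proof is complete.
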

Note that the above theorem is valid for the infinite type case.
From the above theorem, 
the existence of ${\mathcal N}$-canonical coordinates 
implies that both values of  
$\Delta_1(M, p)$ and $\Delta_1^{{\rm reg}}(M, p)$
can be directly seen from geometrical
Newton data of $M$ at $p$. 


\section{Standard coordinates}

Let $M$ be a real hypersurface in $\C^{n+1}$ 
($n\geq 1$) and 
let $p$ lie in $M$. 

It follows from Taylor's formula that 
there exists a holomorphic coordinate 
$(z,w):=(z_1,\ldots,z_n,w)$ at $p$
on which a local defining function $r$ for $M$ 
is expressed near the origin
as in the following form: 
\begin{equation}\label{eqn:3.1}
r(z,w,\bar{z},\bar{w})  
=2{\rm Re}(w)+ 
F(z,\bar{z})+R_1(z,\bar{z})\cdot{\rm Im}(w)+
R_2(z,w,\bar{z},\bar{w}),
\end{equation}
where
\begin{enumerate}
\item 
$F\in C^{\infty}_0(\C^n)$ satisfies that
$F(0,0)=0$ and $|\nabla F(0,0)|=0$;
\item 
$R_1\in C^{\infty}_0(\C^n)$ 
and $R_2\in C^{\infty}_0(\C^{n+1})$.  
Moreover, $R_1$, $R_2$ satisfies that
${\rm ord}(R_1)\geq 1$ and 
$|R_2|\leq C |{\rm Im}(w)|^2$
near $(z,w)=0$ where $C$ is a positive constant
independent of $(z,w)$.
\end{enumerate}
Of course, there may be many such coordinates, 
which are said to be {\it standard} 
for $M$ at $p$.

Furthermore, if there exists 
a holomorphic coordinate 
$(z,w):=(z_1,\ldots,z_n,w)$ around $p$
on which a local defining function $r$ for $M$ 
is expressed near the origin
as in the {\it model} form: 
\begin{equation}\label{eqn:3.2}
r(z,w,\bar{z},\bar{w})  
=2{\rm Re}(w)+ 
F(z,\bar{z}),
\end{equation}
where $F$ is as in (\ref{eqn:3.1}), 
then $(z,w)$ is called a 
{\it good (standard) coordinate} 
for $M$ at $p$. 
Note that 
good coordinates do not always exist
for all hypersurfaces.


\section{Proofs of results in the Introduction}

Let $M$ be a real hypersurface in $\C^n$ and 
let $p$ lie in $M$. 

From Theorem~2.3, 
under the ${\mathcal N}$-nondegeneracy condition,
a defining function for $M$ is convenient
if and only if $M$ is of finite type at $p$. 
Roughly speaking, 
when $M$ does not satisfy the convenience condition, 
$M$ contains a flat part in one direction.
The following lemma exactly explains such situation.

\begin{lemma}
Let $r$ be a local defining function for $M$ near $p$
on some holomorphic coordinate $(z)$.
Then the following four conditions are equivalent:
\begin{enumerate}
\item 
$r$ is not convenient (i.e. $\rho_1(M,p;(z))=\infty$);
\item 
There exists $k\in\{1,\ldots,n\}$ such that
${\mathcal N}_+(r)$ does not intersect $\xi_k$-axis;
\item 
There exists $k\in\{1,\ldots,n\}$ such that
the Taylor series of $r$ does not contain 
any term consisting of $z_k$ and $\bar{z}_k$ only;
\item 
There exists 
$k\in\{1,\ldots,n\}$ such that
$(r\circ\gamma_k)(t)=O(t^N)$ 
for every $N\in\N$, where
$\gamma_k=
(0,\ldots,\stackrel{(k)}{t},\ldots,0)\in
\Gamma^{\rm reg}$.
\end{enumerate}
\end{lemma}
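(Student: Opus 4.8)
The plan is to prove the four statements equivalent by handling them in the natural chain $(1)\Leftrightarrow(2)\Leftrightarrow(3)\Leftrightarrow(4)$, where the first link is essentially definitional, the second is a short convexity argument, and the third is an analytic restriction argument. I would not bother with a cyclic implication scheme, since each biconditional is clean on its own.

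First, $(1)\Leftrightarrow(2)$ follows immediately from the definitions recalled in Section~2. By definition $r$ is \emph{convenient} exactly when ${\mathcal N}_+(r)$ meets every coordinate axis, so ``$r$ is not convenient'' means precisely that there is an index $k\in\{1,\dots,n\}$ for which ${\mathcal N}_+(r)$ misses the $\xi_k$-axis; and $\rho_1(M,p;(z))=\infty$ is defined to be exactly the non-convenient case. No computation is required here.

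Second, for $(2)\Leftrightarrow(3)$ I would note that a monomial $z^{\a}\bar z^{\b}$ with $C_{\a\b}\neq 0$ ``consists of $z_k$ and $\bar z_k$ only'' precisely when the exponent $\a+\b\in S(r)$ lies on the $\xi_k$-axis. The content is then the claim that ${\mathcal N}_+(r)$ meets the $\xi_k$-axis if and only if some generator $\a+\b\in S(r)$ already lies on that axis. One direction is trivial. For the other, suppose a convex combination $\sum_i\lambda_i\,(p^{(i)}+v^{(i)})$, with $p^{(i)}\in S(r)$, $v^{(i)}\in\R_+^n$, $\lambda_i\geq 0$, $\sum_i\lambda_i=1$, lands on the $\xi_k$-axis; then for each coordinate $j\neq k$ the sum of the nonnegative quantities $\lambda_i\,(p^{(i)}_j+v^{(i)}_j)$ vanishes, forcing every participating $p^{(i)}$ onto the axis. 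Hence ${\mathcal N}_+(r)$ misses the $\xi_k$-axis if and only if $r$ has no nonzero pure $z_k,\bar z_k$ monomial (the constant term vanishing since $r(0)=0$), which is exactly $(3)$.

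Third, for $(3)\Leftrightarrow(4)$ I would substitute $\gamma_k(t)=(0,\dots,\stackrel{(k)}{t},\dots,0)$ into the Taylor expansion (\ref{eqn:2.1}) of $r$: every monomial involving some $z_j$ or $\bar z_j$ with $j\neq k$ is annihilated, so the formal Taylor series of $r\circ\gamma_k$ in $(t,\bar t)$ is $\sum_{a,b} C_{\a\b}\,t^{a}\bar t^{b}$, summed over $\a=(0,\dots,a,\dots,0)$, $\b=(0,\dots,b,\dots,0)$. Then $(r\circ\gamma_k)(t)=O(t^N)$ for every $N$ means $r\circ\gamma_k$ is flat at $0$, i.e.\ all its Taylor coefficients vanish, which holds if and only if $C_{\a\b}=0$ for all such $\a,\b$ --- precisely condition $(3)$. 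The main obstacle lies here, in the care demanded by the smooth (non-real-analytic) setting: one must justify that the Taylor coefficients of the composite $r\circ\gamma_k$ are read off coefficientwise from those of $r$, so that flatness of the composite is detected purely at the level of Taylor data, and that ``$O(t^N)$ for every $N$'' is genuinely equivalent to the vanishing of all these coefficients. The convexity step in $(2)\Leftrightarrow(3)$ is the only other place requiring an actual argument rather than a definition chase, but it is elementary.
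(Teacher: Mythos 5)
Your proof is correct; note that the paper itself omits the proof of this lemma, declaring it easy, and your chain --- $(1)\Leftrightarrow(2)$ by the definitions of \emph{convenient} and $\rho_1$, $(2)\Leftrightarrow(3)$ by observing that a finite convex combination $\sum_i\lambda_i\bigl(p^{(i)}+v^{(i)}\bigr)$ of nonnegative vectors can lie on the $\xi_k$-axis only if each participating generator $p^{(i)}\in S(r)$ does, and $(3)\Leftrightarrow(4)$ by restricting $r$ to the $k$-th coordinate line and invoking Taylor's theorem to identify ``$O(t^N)$ for all $N$'' with flatness --- is exactly the intended routine argument. You correctly handle the two points where care is genuinely needed in the smooth setting (the constant term vanishes since $r(0)=0$, and flatness of $r\circ\gamma_k$ is detected coefficientwise from the Taylor data of $r$), so there is nothing to add.
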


The following lemma expresses a property of 
flat hypersurfaces by using the language of Newton polyhedra. 

\begin{lemma}
The following two conditions are equivalent:
\begin{enumerate}
\item 
There exists a standard coordinate for $M$ at $p$
such that $F$ in (\ref{eqn:3.1}) is flat 
at the origin;
\item
There exists a holomorphic coordinate $(z)$ at $p$
such that $p=0$ and
the Newton polyhedron 
of a defining function $r$
for $M$ on $(z)$ takes the form: 
${\mathcal N}_+(r)=
\{(\xi_1,\ldots,\xi_n)\in\R_+^n: \xi_n\geq 1 \}$.
\end{enumerate}
\end{lemma}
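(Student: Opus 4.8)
The plan is to prove the equivalence by establishing both implications directly from the definitions of the Newton polyhedron and the two standard forms, taking advantage of the fact that the "model" defining function singles out the $w$-variable as the distinguished coordinate.

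\medskip

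\noindent\textbf{The implication (i) $\Rightarrow$ (ii).}\quad Suppose we have a standard coordinate $(z,w)=(z_1,\ldots,z_n,w)$ with $F$ flat at the origin. My first step is to rename the ambient coordinates so that the hypersurface sits in $\C^{n}$ with the distinguished variable playing the role of $z_n$; concretely I would set $z_n:=w$ and keep $z_1,\ldots,z_{n-1}$ as before, so that the defining function from \eqref{eqn:3.1} reads $r=2\,{\rm Re}(z_n)+F(z',\bar z')+R_1(z',\bar z')\,{\rm Im}(z_n)+R_2$, where $z'=(z_1,\ldots,z_{n-1})$. Because $F$ is flat, every Taylor monomial of $F$ vanishes to infinite order, so $F$ contributes nothing to the Newton polyhedron. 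The term $2\,{\rm Re}(z_n)=z_n+\bar z_n$ contributes exactly the multi-indices with $\xi_n=1$ and $\xi_j=0$ for $j<n$; since ${\rm ord}(R_1)\ge 1$ and $|R_2|\le C|{\rm Im}(z_n)|^2$, every monomial coming from $R_1\cdot{\rm Im}(z_n)$ or from $R_2$ carries a factor of $z_n$ or $\bar z_n$ together with at least one more variable, hence sits strictly above the hyperplane $\xi_n=1$ in the $\xi_n$-direction. I then verify that the convex hull of $\{\xi_n\ge 1\}$-type vertices, together with all higher points, is exactly $\{\xi\in\R_+^n:\xi_n\ge 1\}$, giving (ii).

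\medskip

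\noindent\textbf{The implication (ii) $\Rightarrow$ (i).}\quad Here I start from a coordinate $(z)$ with ${\mathcal N}_+(r)=\{\xi\in\R_+^n:\xi_n\ge 1\}$. The shape of this polyhedron tells me two things about the Taylor series of $r$: first, there is a nonzero monomial of the form $z_n$ or $\bar z_n$ (a vertex on the $\xi_n$-axis at height $1$), and second, no monomial is supported on the pure-$z'$ block, i.e. the restriction of the Taylor series to the variables $z',\bar z'$ vanishes identically, which is precisely flatness of the $z'$-part. After a linear change mixing $z_n$ and $\bar z_n$ if necessary, I arrange the degree-one part to be $2\,{\rm Re}(z_n)$ (the gradient condition $|\nabla F(0,0)|=0$ from the standard form lets me absorb any pure first-order $z'$ terms, of which there are none anyway by the polyhedron condition). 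Renaming $w:=z_n$ and collecting terms, I separate $r$ into the linear piece $2\,{\rm Re}(w)$, the part depending only on $z',\bar z'$ — which is flat by the above — and a remainder; I then check the remainder can be written as $R_1\cdot{\rm Im}(w)+R_2$ with the order and size estimates required by \eqref{eqn:3.1}, yielding a standard coordinate in which $F$ is flat.

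\medskip

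\noindent\textbf{Main obstacle.}\quad I expect the delicate point to be the bookkeeping in (ii) $\Rightarrow$ (i): reorganizing an arbitrary smooth $r$ whose Newton polyhedron equals $\{\xi_n\ge 1\}$ back into the precise structural decomposition \eqref{eqn:3.1}, in particular ensuring that the remainder really splits into a piece linear in ${\rm Im}(w)$ with $R_1$ of positive order and a piece $R_2$ dominated by $|{\rm Im}(w)|^2$. This is a Taylor-grouping argument — writing $w=\tfrac12\big((w+\bar w)+(w-\bar w)\big)$ and sorting monomials by their dependence on ${\rm Re}(w)$ versus ${\rm Im}(w)$ — and the subtlety is to confirm that flatness in the $z'$-variables, guaranteed only by the polyhedron, propagates correctly after the change of coordinates so that no stray non-flat pure-$z'$ terms are reintroduced. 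The forward direction (i) $\Rightarrow$ (ii) is comparatively routine once the monomials of $R_1\cdot{\rm Im}(w)$ and $R_2$ are seen to lie above $\xi_n=1$.
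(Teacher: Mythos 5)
Your direction (i) $\Rightarrow$ (ii) is essentially right (the paper omits the proof of this lemma as easy, so there is nothing to compare against): after the renaming $z_n:=w$, flatness of $F$ means $F$ contributes nothing to $S(r)$, the term $2{\rm Re}(z_n)$ contributes the vertex $e_n=(0,\ldots,0,1)$, the monomials of $R_1\cdot{\rm Im}(z_n)$ have $\xi_n=1$ but $\xi'\neq 0$ (so they lie in $e_n+\R_+^n$ --- not ``strictly above the hyperplane $\xi_n=1$'' as you write, a harmless slip), and the bound $|R_2|\le C|{\rm Im}(z_n)|^2$ forces every Taylor monomial of $R_2$ to have $(z_n,\bar z_n)$-degree at least $2$; the convex hull of $e_n+\R_+^n$ together with these points is exactly $\{\xi\in\R_+^n:\xi_n\ge 1\}$. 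One caution in the converse direction: a ``linear change mixing $z_n$ and $\bar z_n$'' is not a holomorphic coordinate change and is not available to you; it is also unnecessary, since reality of $r$ forces the degree-one part to be $2{\rm Re}(a z_n)$ with $a\neq 0$, and the $\C$-linear substitution $w=a z_n$ already normalizes it to $2{\rm Re}(w)$.

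The genuine gap is in your method for (ii) $\Rightarrow$ (i). The hypothesis ${\mathcal N}_+(r)=\{\xi\in\R_+^n:\xi_n\ge 1\}$ says only that every Taylor monomial of $r$ is divisible by $z_n$ or $\bar z_n$; it does \emph{not} exclude terms such as $({\rm Re}(z_n))^2$ or $|z_1|^2\,{\rm Re}(z_n)$. For instance $r=2{\rm Re}(w)+({\rm Re}(w))^2$ satisfies (ii), yet no regrouping of its Taylor monomials --- in particular no sorting by dependence on ${\rm Re}(w)$ versus ${\rm Im}(w)$, as you propose --- can put it in the form $2{\rm Re}(w)+F(z',\bar z')+R_1(z',\bar z')\,{\rm Im}(w)+R_2$ with $|R_2|\le C|{\rm Im}(w)|^2$, because $({\rm Re}(w))^2$ is not dominated by $|{\rm Im}(w)|^2$. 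What is missing is that you must change the \emph{defining function}, not merely collect terms: since the linear part of $r$ is $2{\rm Re}(w)$, the implicit function theorem lets you write $M$ as a graph ${\rm Re}(w)=-\tfrac12 G(z',\bar z',{\rm Im}(w))$ and replace $r$ by $\tilde r=2{\rm Re}(w)+G$, which after a first-order Taylor expansion of $G$ in ${\rm Im}(w)$ has exactly the shape (\ref{eqn:3.1}) with $F(z',\bar z')=G(z',\bar z',0)$. Flatness of this $F$ --- the ``propagation'' issue you correctly flag but do not resolve --- then follows from a Hadamard quotient argument rather than from monomial bookkeeping: all monomials of $r$ having $w$-degree $\ge 1$ means $r(z',0,\bar z',0)$ is flat, and writing $r(z',u,0)=r(z',0,0)+u\,\psi(z',u)$ with $\psi(0)=2\neq 0$, the implicit solution $u(z')=-r(z',0,0)/\psi(z',u(z'))$ is flat, hence so is $F=-2u$. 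With this replacement your outline becomes a complete proof; as written, the ``Taylor-grouping'' step would fail.
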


Since the above two lemmas are easy, 
their proofs are omitted.  

\subsection{Proof of Proposition~1.1}

(3) $\Longleftrightarrow$ (5)
was stated in Lemma~4.1.
(6) $\Longrightarrow$ (7)
is easy to see from Lemma~4.2.
(7) $\Longrightarrow$ (2)
is shown in \cite{FoN20}. 
(Lemma~5 in \cite{FoN20} states
(7) $\Longrightarrow$ (1), but 
its proof actually implies the above stronger implication.) 
The other implications are obvious.

\begin{remark}
Let us consider the converse of each implication 
in Proposition~1.1.
We give an example of the hypersurface 
showing that the converse does not hold 
in the implication with only one direction. 
All our examples of hypersurface $M$ in $\C^{n+1}$ 
($n\in\N$) are pseudoconvex and admit a good coordinate, 
i.e., 
$M$ is locally expressed as in the model form:
$
2{\rm Re}(w)+F(z,\bar{z})=0, 
$ as in (\ref{eqn:3.2}).
Thus, 
we only write a function $F(z,\bar{z})$ 
for each case. 
In the two-dimensional case,  
the converses of some implications are also true 
(see Section~5). 
In these cases,
counterexamples must be constructed 
in $\C^n$ with $n\geq 3$.  

\begin{itemize}
\item ((1) $\not\Rightarrow$ (2)) \quad
$F(z,\bar{z})=
|z_1^3-z_2^2|^2+|z_3|^2+\cdots+|z_n|^2$ \quad
($n\geq 2$);
\item ((2) $\not\Rightarrow$ (3)) \quad
$F(z,\bar{z})=
f(z_1,\bar{z}_1)+|z_2|^2+\cdots+|z_n|^2$ \quad
($n\in\N$);
\item ((1) $\not\Rightarrow$ (4))  \quad
$F(z,\bar{z})=
f(z_1,\bar{z}_1)+|z_2|^2+\cdots+|z_n|^2$
 \quad
($n\in\N$);
\item ((4) $\not\Rightarrow$ (3)) \quad
$F(z,\bar{z})=
|z_1^3-z_2^2|^2+|z_3|^2+\cdots+|z_n|^2$ \quad
($n\geq 2$);
\item ((5) $\not\Rightarrow$ (6))  \quad
$F(z,\bar{z})=|z_n|^2$ \quad
($n\geq 2$);
\item ((2) $\not\Rightarrow$ (7)) \quad
$F(z,\bar{z})=|z_n|^2$ \quad
($n\geq 2$);
\item ((7) $\not\Rightarrow$ (6))  \quad
$F(z,\bar{z})=f(z_n,\bar{z}_n)$ \quad
($n\in\N$);
\item ((6) $\not\Rightarrow$ (8)) \quad
$F(z,\bar{z})=
\exp(-|z_n|^{-2})$ \quad
($n\in\N$).
\end{itemize}
Here $f\in C_0^{\infty}(\C)$ is a subharmonic 
function admitting a divergent Taylor series at the origin. 
It was constructed in \cite{FoN20} 
(see Remark~5.8).

It is shown in \cite{KiN15}, \cite{NgC19}
(see also Corollary~6.4 in this paper) 
that the second example shows 
(2) $\not\Rightarrow$ (3) and 
(1) $\not\Rightarrow$ (4) in the two-dimensional case. 
The higher dimensional case can be easily shown. 
It is easy to check that the other hypersurfaces
are counterexamples. 

\end{remark}

\subsection{Proof of Theorem~1.3}

It suffices to show the implication: (1) $\Longrightarrow$ (5).

Let $(z)$ be an ${\mathcal N}$-canonical coordinate for $M$ at $p$ 
and let $r$ be a local defining function for $M$ near $p$
on the coordinate $(z)$.
From Theorem~2.3, 
the condition (1) implies $\rho_1(M,p;(z))=\infty$, 
which is equivalent to the condition (5) from Lemma~4.1. 

%

\section{Two dimensional case}

In this section, 
we more precisely consider Question~1 in the Introduction 
in the case when a real hypersurface $M$ is in $\C^2$.
Let $p\in M$. 

In the two dimensional case, 
many implications in Proposition~1.1 can be refined
by equivalences. 

\begin{proposition}
Let $M$ be a real hypersurface in $\C^2$ and 
let $p$ lie in $M$.
Among the above eight conditions for $M$ at $p$ 
in Proposition~1.1, 
the following implications hold:
\begin{equation*}
\xymatrix{
&&(7)  \ar@{<=>}[ld]  &&& \\
(1) \ar@{<=>}[r]  & (2) \ar@{<=}[r] & 
(3) \ar@{<=>}[r]  & (5) \ar@{<=>}[r] &
(6) \ar@{<=}[r]  \ar@{=>}[llu] & (8).  
 \\
&    (4) \ar@{=>}[lu] \ar@{<=>}[ru] &  &  & & 
  }
\end{equation*}
\end{proposition}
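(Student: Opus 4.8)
The plan is to inherit every implication already proved for Proposition~1.1 and to upgrade exactly the three arrows that become equivalences in dimension two, exploiting the fact that in $\C^2$ the complex tangent space $T_0^{\C}M$ is a single complex line (equivalently, a complex line and a complex hypersurface coincide). Concretely, beyond Proposition~1.1 one needs only the three atomic implications $(1)\Rightarrow(7)$, $(5)\Rightarrow(6)$ and $(4)\Rightarrow(3)$; everything else in the diagram is already available. The first is the classical coincidence of the Bloom--Graham type and the D'Angelo type in the two-dimensional case, recalled in the Introduction: since these invariants are equal, $(1)$ and $(7)$ are literally the same condition, and combined with the chain $(7)\Rightarrow(2)\Rightarrow(1)$ from Proposition~1.1 this collapses $\{(1),(2),(7)\}$ into one equivalence class.

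For $(5)\Rightarrow(6)$ I would argue as follows. By Lemma~4.1 condition $(5)$ provides a coordinate axis, say $\{\zeta_2=0\}$ in a coordinate $(\zeta_1,\zeta_2)$, tangent to $M$ at $p$ to infinite order, so that $r(\zeta_1,0,\bar\zeta_1,0)$ is flat. (For the general implication $(3)\Rightarrow(6)$ one first straightens the given regular curve by a biholomorphism to such an axis.) The two-dimensional point is that this axis is a complex line of codimension one and therefore equals $T_0^{\C}M$; in particular $\partial r/\partial\zeta_2(0)\neq0$. Flatness on the axis says that every Taylor term of $r$ involving only $\zeta_1,\bar\zeta_1$ vanishes, so the pure holomorphic part $P(\zeta)$ of $r$ (the part absorbed into $2{\rm Re}(w)$ when passing to a standard coordinate) satisfies $P(\zeta_1,0)\equiv0$ and is hence divisible by $\zeta_2$, with $P=\zeta_2 Q$ and $Q(0)\neq0$. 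Consequently the standard-coordinate hypersurface $\{w=0\}=\{P=0\}$ is exactly $\{\zeta_2=0\}$, so the function $F$ of \eqref{eqn:3.1} equals $r|_{\{\zeta_2=0\}}$ and is flat. By Lemma~4.2 this is condition $(6)$, and together with $(6)\Rightarrow(5)$ from Proposition~1.1 we obtain $(5)\Leftrightarrow(6)$.

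The remaining and genuinely harder implication is $(4)\Rightarrow(3)$, where the tangent curve may be singular and straightening is no longer available. Here the plan is to write the given curve as $\gamma(t)=(z(t),w(t))$ in a standard coordinate, observe that tangency to infinite order forces $\ord(w)\geq2$ (otherwise the order-one part of $2{\rm Re}(w(t))$ survives in $r\circ\gamma$), and then examine the \emph{mixed} part of $r\circ\gamma$. Since $2{\rm Re}(w(t))$ and ${\rm Im}(w(t))$ are harmonic in $t$, the mixed part of $F(z(t),\overline{z(t)})$ must be cancelled by the mixed contributions of the remainder $R_1\,{\rm Im}(w)+R_2$. Writing $F=\sum_{j,k}c_{jk}z^{j}\bar z^{k}$ and $z(t)=a t^{p}+\cdots$ with $a\neq0$, the lowest mixed monomials produced by $F$ are the linearly independent $c_{jk}a^{j}\bar a^{k}\,t^{pj}\bar t^{pk}$ with $j,k\geq1$; if they are not cancelled they force the corresponding $c_{jk}$ to vanish, so $F$ is pluriharmonic, can be absorbed into $2{\rm Re}(w)$, and condition $(6)$ (hence $(5)$ and $(3)$) follows.

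I expect the main obstacle to be precisely the interaction with $R_1\,{\rm Im}(w)$ and $R_2$ in a general standard coordinate, where a good (model) coordinate need not exist. One must bookkeep orders carefully---using $\ord(R_1)\geq1$, $|R_2|\leq C|{\rm Im}(w)|^{2}$ and $\ord(w)\geq2$---to guarantee that the lowest mixed monomial coming from $F$ has strictly smaller order than any mixed term produced by the remainder, so that no cancellation is possible. When the relative orders of $z(t)$ and $w(t)$ are unfavourable this comparison is delicate, and the cleanest route is probably to reduce first to the model form $r=2{\rm Re}(w)+F(z,\bar z)$, in which $R_1=R_2=0$ and the argument above is transparent, and to show that such a reduction is always available once an infinite-order tangent curve is known to exist in $\C^{2}$.
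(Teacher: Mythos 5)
The load-bearing step of this proposition is $(4)\Rightarrow(3)$, and your argument for it has a genuine gap. Even granting your mixed-term bookkeeping (which, as you note, is delicate: the lowest mixed monomial of $F\circ\gamma$ is comparable with mixed contributions of $R_1\,{\rm Im}(w)+R_2$ unless ${\rm ord}(w)$ is large relative to the first mixed order of $F$, which is not automatic), your endgame is: $F$ has only pure Taylor terms, hence can be ``absorbed into $2{\rm Re}(w)$,'' hence $(6)$ holds. For a merely smooth hypersurface the pure part is only a \emph{formal} power series --- it is exactly the series ${\mathcal S}(z)$ of (5.1) --- and the substitution $w^*=w-{\mathcal S}(z)$ is a legitimate coordinate change only when ${\mathcal S}$ has positive radius of convergence. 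That convergence is precisely condition (iii) of Theorem~5.5 and is exactly what can fail: by Corollary~5.7 and Remark~5.8 there are smooth pseudoconvex $M\subset\C^2$ with $\Delta_1(M,p)=\infty$, Taylor series of $F$ consisting of pure terms only, and divergent ${\mathcal S}$, for which $(3),(4),(5),(6)$ all fail. So ``$F$ pluriharmonic $\Rightarrow(6)$'' is false, and your chain $(4)\Rightarrow(6)\Rightarrow(5)\Rightarrow(3)$ collapses; convergence of ${\mathcal S}$ must be \emph{extracted} from the hypothesis $(4)$, not assumed. Your fallback --- that reduction to the model form $2{\rm Re}(w)+F=0$ ``is always available once an infinite-order tangent curve exists'' --- is also unjustified: good coordinates need not exist (Section~3; e.g.\ non-rigid real-analytic $M$ of infinite type satisfy $(4)$), and the general non-model case is posed as an open problem (Problem~7.1). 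The paper's route is entirely different and avoids convergence: Lemma~5.2 shows \emph{a priori} that every $\gamma\in\Gamma$ tangent to $M\subset\C^2$ to infinite order is regular, by computing ${\mathcal N}_+(r)$ in a standard coordinate. If ${\rm ord}(F)=m<\infty$, then ${\mathcal N}_+(r)=\{\xi\in\R_+^2:\xi_1/m+\xi_2\geq1\}$ and the unique compact facet $\kappa$ must carry a degenerate $\kappa$-part (otherwise Theorem~2.3 would give $\Delta_1(M,p)=m<\infty$), and degeneracy forces $\gamma(t)=(t,ct^m+O(t^{m+1}))$ with $c\neq0$, which is regular; if $F$ is flat the curve is essentially $(t,0)$. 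One could then, if desired, run the convergence argument of Theorem~5.5 (i)$\Rightarrow$(iii) on the regular curve; but regularity comes first.

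Two secondary points. Your proof of $(5)\Rightarrow(6)$ repeats the same illegitimate move: treating the formal pure part $P$ as a holomorphic function (``$\{w=0\}=\{P=0\}$'') is meaningless when $P$ diverges. Fortunately the implication needs no coordinate change at all, which is why the paper calls it obvious: in $\C^2$, if $r$ is not convenient then (after swapping the variables if necessary) no Taylor monomial of $r$ consists of $z_1,\bar z_1$ alone, so ${\mathcal N}_+(r)\subseteq\{\xi_2\geq1\}$; since $\nabla r(0)\neq0$ and the $z_1$-linear term is excluded, the $z_2$-linear term survives, so $(0,1)\in{\mathcal N}_+(r)$ and ${\mathcal N}_+(r)=\{\xi\in\R_+^2:\xi_2\geq1\}$ in the same coordinate. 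Your identification of $(1)$ with $(7)$ is consistent with the paper, which simply cites D'Angelo's book for the equivalence of $(1),(2),(7)$; and your pluriharmonicity claim itself is true (it is Lemma~5.4), but the paper proves it by exhibiting an ${\mathcal N}$-nondegenerate leading part $2{\rm Re}(w^*)+p_m$ and contradicting $\Delta_1(M,p)=\infty$ via Theorem~2.3, rather than by curve-wise cancellation analysis.
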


\begin{proof}
(5) $\Longrightarrow$ (6) is obvious.
It is known in \cite{Dan93} that
(1), (2), (7) are equivalent. 
(4) $\Longrightarrow$ (3) 
will be shown in Lemma~5.2, below.
\end{proof}

\begin{lemma}
If $\gamma\in\Gamma$ is tangent to 
$M\subset \C^2$ at $p$
to infinite order, 
then $\gamma\in\Gamma^{\rm reg}$. 
\end{lemma}

\begin{proof}
Let $(z,w)$ be a standard coordinate for $M$ 
at $p$ such that $M$ is expressed as in (\ref{eqn:3.1}).

First, if $F$ is flat at $0$, then the curve $\gamma$,  
satisfying 
$(r\circ\gamma)(t)=O(t^N)$ for every $N\in\N$, 
essentially takes the form: 
$\gamma(t)=(t,0)$.
This curve is regular.

Next, let us consider the case when ${\rm ord}(F)=m$ 
with some $m\in\N$.
In this case, the Newton polyhedron of $r$ takes 
the form
${\mathcal N}_+(r)=
\{\xi\in\R_+^2:\xi_1/m+\xi_2\geq 1\}$.
Let 
$\kappa:=\{\xi\in\R_+^2:\xi_1/m+\xi_2= 1\}$,
which is the only compact facet of ${\mathcal N}_+(r)$.
If $r_{\kappa}$ were ${\mathcal N}$-nondegenerate, 
then Theorem~2.3 implies $\Delta_1(M,p)=m<\infty$,
which is a contradiction. 
If $r_{\kappa}$ is not ${\mathcal N}$-nondegenerate,
then a desired curve must be written in the form 
$\gamma(t)=(t,ct^m+O(t^{m+1}))$ with $c\neq 0$
from the definition of the ${\mathcal N}$-nondegeneracy. 
This curve is also regular. 
\end{proof}


From Theorem~2.3,
we can see the following. 

\begin{theorem}
Let $M\subset\C^2$.
If there exists an ${\mathcal N}$-canonical coordinate 
for $M$ at $p$, then
the seven conditions (1)--(7) in Proposition~5.1 
are equivalent. 
\end{theorem}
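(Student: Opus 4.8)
The plan is to use the $\mathcal{N}$-canonical hypothesis to supply the single implication missing from the unconditional diagram in Proposition~5.1. Reading that diagram, the seven conditions already organize into two clusters of mutual equivalences, namely $\{(1),(2),(7)\}$ and $\{(3),(4),(5),(6)\}$, joined only by the one-way implication $(3)\Rightarrow(2)$. Hence the sole link I need to add is a reverse implication from the first cluster to the second, and $(1)\Rightarrow(5)$ is the natural target.

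First I would fix an $\mathcal{N}$-canonical coordinate $(z)$ for $M$ at $p$ and let $r$ be a local defining function on $(z)$. By Theorem~\ref{thm:2.3} the equalities $\Delta_1(M,p)=\Delta_1^{\rm reg}(M,p)=\rho_1(M,p;(z))$ hold, and this identity remains valid in the infinite-type case. Therefore, assuming condition $(1)$, i.e. $\Delta_1(M,p)=\infty$, forces $\rho_1(M,p;(z))=\infty$; by definition this means that $r$ is not convenient on the chosen coordinate, which is exactly condition $(5)$. This is precisely the content of Theorem~1.3 specialized to $n=2$.

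With $(1)\Rightarrow(5)$ in hand, I would close the chain by invoking Proposition~5.1. That proposition already gives $(5)\Leftrightarrow(6)$ and the cluster equivalences $(1)\Leftrightarrow(2)\Leftrightarrow(7)$ and $(3)\Leftrightarrow(4)\Leftrightarrow(5)$, together with the bridge $(3)\Rightarrow(2)$. Combining these with the new implication $(1)\Rightarrow(5)$ makes the two clusters mutually implied, producing a single cycle through all seven conditions; hence $(1)$--$(7)$ are equivalent.

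No step presents a genuine obstacle, since the entire argument rests on the identity furnished by Theorem~\ref{thm:2.3}. The only point demanding care is that this identity stay meaningful when the common value is $\infty$, so that $\Delta_1(M,p)=\infty$ legitimately transfers to $\rho_1(M,p;(z))=\infty$. Once that is granted, the equivalence of $\rho_1=\infty$ with non-convenience recorded in Lemma~4.1 and the bookkeeping of Proposition~5.1 finish the proof.
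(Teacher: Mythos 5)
Your proposal is correct and takes essentially the same route as the paper: the paper proves this theorem simply by invoking Theorem~2.3 (i.e., the implication $(1)\Rightarrow(5)$ of Theorem~1.3, via $\Delta_1(M,p)=\rho_1(M,p;(z))=\infty$ and Lemma~4.1) together with the implications already recorded in Proposition~5.1. Your identification of $(1)\Rightarrow(5)$ as the single missing link between the clusters $\{(1),(2),(7)\}$ and $\{(3),(4),(5),(6)\}$, and your note that Theorem~2.3 remains valid when the common value is $\infty$, match the intended argument exactly.
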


The following lemma is essentially the same 
as Theorem 2 in \cite{KiN15}
(see also \cite{Kol05}).  

\begin{lemma}
Let $M\subset\C^2$. 
If $\Delta_1(M,p)=\infty$, then
the Taylor series of $F$ in (\ref{eqn:3.1}) 
at the origin consists of only pure terms
on every standard coordinate. 
\end{lemma}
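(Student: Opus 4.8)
The plan is to prove the contrapositive. Since $\Delta_1(M,p)$ is a biholomorphic invariant and any two standard coordinates differ by a biholomorphism fixing $p$, it suffices to show: if in some standard coordinate $(z,w)$ the Taylor series of $F$ contains a mixed term, then $\Delta_1(M,p)<\infty$. Write $F=2{\rm Re}(P(z))+G(z,\bar z)$, where $2{\rm Re}(P(z))$ collects the pure (harmonic) monomials and $G=\sum_{a,b\geq 1}C_{ab}z^a\bar z^b$ is the mixed part; the hypothesis is $G\not\equiv 0$, and since $\nabla F(0)=0$ we have $\ord(P)\geq 2$ and $d:=\ord(G)\geq 2$.

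The key structural observation is a pure/mixed dichotomy for $r\circ\gamma$. Given $\gamma(t)=(z(t),w(t))\in\Gamma$, expand $r\circ\gamma$ as a series in $t,\bar t$. The transverse term $2{\rm Re}(w(t))$ and the harmonic term $2{\rm Re}(P(z(t)))$ are real parts of holomorphic functions of $t$, so they contribute only pure monomials $t^k,\bar t^k$; a genuine mixed monomial $t^i\bar t^j$ with $i,j\geq 1$ can never arise this way. Hence the mixed monomials of $r\circ\gamma$ come only from $G(z(t),\bar z(t))$ and from the remainder terms $R_1(z(t),\bar z(t)){\rm Im}(w(t))$ and $R_2(\gamma(t))$. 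Because pure and mixed monomials cannot cancel one another, $\ord(r\circ\gamma)\leq\ord(\text{mixed part of }r\circ\gamma)$.

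I would first settle the model case $R_1\equiv R_2\equiv 0$, where the mixed part of $r\circ\gamma$ is exactly $G(z(t),\bar z(t))$. If $z\equiv 0$, or if $\ord(w)<\ord(z)$, the transverse term $2{\rm Re}(w(t))$ dominates and $\ord(r\circ\gamma)=\ord(w)=\ord(\gamma)$, so the ratio is $1$. Otherwise $m_1:=\ord(z)\leq\ord(w)$; writing $z(t)=ct^{m_1}(1+\cdots)$ and $t=\rho e^{i\theta}$, the degree-$d$ part of $G$ becomes $\rho^{m_1 d}\sum_{a+b=d}C_{ab}c^a\bar c^b e^{im_1(a-b)\theta}$, and since the frequencies $m_1(a-b)$ are distinct this trigonometric sum cannot vanish identically (else all $C_{ab}=0$, contradicting $d=\ord(G)$). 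Thus $\ord(G(z(t),\bar z(t)))=m_1 d$ exactly, so $\ord(r\circ\gamma)\leq m_1 d=d\cdot\ord(\gamma)$, and taking the supremum gives $\Delta_1(M,p)\leq d<\infty$ in the model case.

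The remaining step is to absorb the remainder terms in a general standard coordinate, and this is where I expect the main obstacle. Using $\ord(R_1)\geq 1$ and $|R_2|\leq C|{\rm Im}(w)|^2$, the term $R_1{\rm Im}(w)$ has order $\geq\ord(z)+\ord(w)$ and $R_2$ has order $\geq 2\,\ord(w)$. If $\ord(w)<\ord(z)$ the pure term $2{\rm Re}(w)$ again dominates at order $\ord(w)=\ord(\gamma)$; if $\ord(w)>(d-1)\ord(z)$ both remainders lie strictly above $m_1 d$ and the model estimate is unaffected. The hard part is the intermediate regime $\ord(z)\leq\ord(w)\leq(d-1)\ord(z)$, in which $R_1{\rm Im}(w)$ may reach the same order as the surviving mixed term of $G$. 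I would rule out a conspiracy raising the contact order to infinity by tracking the lowest-order mixed monomial of $r\circ\gamma$ and showing that cancelling it forces $w$, hence ${\rm Im}(w)$, down in order, which reinstates the uncancellable pure term $2{\rm Re}(w)$ at bounded order. The conceptual reason behind all of this is that the controlling invariant is the Levi coefficient, whose leading part along $\{{\rm Im}(w)=0\}$ is $G_{z\bar z}$: since $G\not\equiv 0$ forces $G_{z\bar z}\not\equiv 0$ as a formal power series, the Levi form is non-flat at $p$, and in $\C^2$ non-flatness of the Levi form is equivalent to finite type. This is precisely the content of Theorem 2 in \cite{KiN15} (see also \cite{Kol05}), which I would invoke to settle the intermediate regime rigorously.
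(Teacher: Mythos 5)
Your setup is sound as far as it goes: the contrapositive reduction, the pure/mixed dichotomy for $r\circ\gamma$, and the model case $R_1\equiv R_2\equiv 0$ are all correct, and your distinct-frequency argument showing $\ord(G(z(t),\bar z(t)))=m_1 d$ is in substance the same non-cancellation phenomenon the paper exploits. The genuine gap is exactly the regime you flag as hard, $\ord(z)\leq \ord(w)\leq (d-1)\ord(z)$, and neither of your two proposed ways of closing it works. The sketch (``cancelling the lowest-order mixed monomial forces $w$ down in order'') is not an argument: the mixed part of $R_1(z(t),\bar z(t))\,{\rm Im}(w(t))$ can kill the order-$m_1 d$ mixed term of $G\circ z$ without any mechanism lowering $\ord(w)$, and nothing you wrote excludes an iterated conspiracy cancelling each successive mixed order; controlling precisely this is the substance of the lemma. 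The fallback is circular: the paper itself states that this lemma ``is essentially the same as Theorem 2 in \cite{KiN15}'', so invoking that theorem is invoking the statement to be proved. Moreover, your bridging claim that ``in $\C^2$ non-flatness of the Levi form is equivalent to finite type'' is either false or question-begging: read literally (Levi form not identically zero near $p$) it fails for $2{\rm Re}(w)+e^{-1/|z|^2}=0$, whose Levi form is nonzero for $z\neq 0$ while $\Delta_1(M,p)=\infty$; read formally (nonvanishing Taylor series of the Levi coefficient at $p$), the implication ``formally non-flat Levi form $\Rightarrow$ finite type'' is precisely the contrapositive you are trying to establish, since mixed terms of $F$ correspond to Taylor coefficients of $F_{z\bar z}$.

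For comparison, the paper closes this gap by a different and complete route. It first performs a holomorphic change $w\mapsto w^*$ absorbing the pure terms of order at most $m$ (where $m$ is the minimal order of a mixed term), so that $M$ is defined by $2{\rm Re}(w^*)+p_m(z,\bar z)+Q+R_1^*\,{\rm Im}(w^*)+R_2^*=0$ with $\ord(Q)\geq m+1$. Then ${\mathcal N}_+(r)=\{\xi\in\R_+^2:\xi_1/m+\xi_2\geq 1\}$, the part of $r$ on the unique compact facet is exactly $2{\rm Re}(w^*)+p_m$, and this is ${\mathcal N}$-nondegenerate by the very frequency argument you used in the model case; hence $(z,w^*)$ is a canonical coordinate and Theorem~2.3 of \cite{Kam20} gives $\Delta_1(M,p)=\rho_1(M,p;(z,w^*))=m<\infty$, a contradiction. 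The point is that the entire analysis of the $R_1$, $R_2$ remainders and of possible high-order cancellations along singular curves --- your intermediate regime --- is packaged inside Theorem~2.3; a self-contained proof would have to reprove that content, which your proposal does not do.
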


\begin{proof}
Let us assume the existence of 
a standard coordinate $(z,w)$ 
on which 
the Taylor series of $F$  
contains a mixed term. 
Let $m$ be the minimum order of the 
mixed terms of Taylor series of $F$ and 
let $p_m(z,\bar{z})$ denote the sum of 
the mixed terms of order $m$. 

It is easy to construct a standard coordinate $(z,w^*)$
on which $M$ can be expressed by the equation: 
\begin{equation*}
2{\rm Re}(w^*)+p_m(z,\bar{z})+Q(z,\bar{z})+
R_1^*\cdot {\rm Im}(w^*)
+R_2^*=0,
\end{equation*}
where $Q\in C_0^{\infty}(\C)$ with 
${\rm ord}(Q)\geq m+1$ and 
$R_1^*,R_2^*$ have the same properties as those of 
$R_1,R_2$ in (\ref{eqn:3.1}), respectively. 
Since $2{\rm Re}(w^*)+p_m(z)$ is 
${\mathcal N}$-nondegenerate, 
the above $(z,w^*)$ is canonical for $M$ at $p$. 
It follows from Theorem 2.3 that 
$\Delta_1(M,p)=m<\infty$, which is a contradiction.
\end{proof}

From the above lemma, 
when $\Delta_1(M,p)=\infty$,  
the Taylor series of $F$ can be expressed as 
$2\sum_{j=2}^{\infty} {\rm Re}(c_j z^j)$
where $c_j\in\C$. 
When the hypersurface $M$ and $p\in M$ 
are fixed, 
the sequence of complex numbers 
$\{c_j\}_{j\in\N}$ is determined by 
the chosen standard coordinates $(z,w)$ only. 
For a given coordinate $(z,w)$, 
we define the formal power series:
\begin{equation}\label{eqn:5.1}
{\mathcal S}(z)=
\sum_{j=2}^{\infty} c_j z^j
\quad (z\in\C).
\end{equation} 
If $M$ is real analytic, then 
the series ${\mathcal S}(z)$
converges near the origin.  
But,
its converse is not always true.  
From Cauchy-Hadamard's formula, 
positivity of the convergence radius of the power series
${\mathcal S}(z)$ is
equivalent to the condition
$\limsup_{j\to\infty} |c_j|^{1/j}<\infty$.

Hereafter in this section, 
we only consider the case when 
the hypersurface admits a good coordinate at $p$. 
That is to say, $M$ can be expressed as in the model form
\begin{equation}\label{eqn:5.2}
r(z,w,\bar{z},\bar{w})=2{\rm Re}(w)+F(z,\bar{z})=0,
\end{equation}
where $F$ is the same as that in (\ref{eqn:3.1}). 

The following theorem gives equivalence conditions for 
Question~1 in the Introduction 
in the two-dimensional model case. 

\begin{theorem}
Let $M$ be a real hypersurface in $\C^2$  
admitting a good coordinate at $p$ as in (\ref{eqn:5.2}).
If $\Delta_1(M,p)=\infty$, then
the following three conditions are equivalent.
\begin{enumerate}
\item 
There exists a $\gamma\in\Gamma^{\rm reg}$
tangent to $M$ at $p$ to infinite order; 
\item 
There exists a good coordinate $(z,w)$ 
for $M$ at $p$ on which
$F$ is flat;
\item
There exists a good coordinate $(z,w)$ 
for $M$ at $p$ such that the convergence radius of 
the power series 
${\mathcal S}(z)$ in (\ref{eqn:5.1}) is 
positive.
\end{enumerate}
\end{theorem}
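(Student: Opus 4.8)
The plan is to prove the cycle $(1)\Rightarrow(3)\Rightarrow(2)\Rightarrow(1)$, treating the last two arrows as routine and concentrating on $(1)\Rightarrow(3)$. Throughout I work in a fixed good coordinate $(z,w)$ with defining function $r=2{\rm Re}(w)+F(z,\bar z)$. Since $\Delta_1(M,p)=\infty$, Lemma~5.4 guarantees that the Taylor series of $F$ consists of pure terms only, so $F(z,\bar z)=2{\rm Re}({\mathcal S}(z))$ at the level of Taylor series, where ${\mathcal S}(z)=\sum_{j\ge 2}c_j z^j$ is the formal series in (\ref{eqn:5.1}). This pure‑term structure is exactly what makes the holomorphic/antiholomorphic bookkeeping below succeed, and it is the single place where the standing hypothesis $\Delta_1(M,p)=\infty$ is used: a mixed monomial $z^a\bar z^b$ in $F$ would, after composition with the curve, produce genuinely mixed powers $t^a\bar t^b$ and spoil the splitting.

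For the easy arrows: to see $(2)\Rightarrow(1)$, if $F$ is flat in some good coordinate then the regular curve $\gamma(t)=(t,0)$ satisfies $(r\circ\gamma)(t)=F(t,\bar t)=O(|t|^N)$ for every $N\in\N$, so $(1)$ holds. To see $(3)\Rightarrow(2)$, if ${\mathcal S}$ has positive radius of convergence in some good coordinate, then ${\mathcal S}(z)$ is a genuine holomorphic function vanishing at $0$, and the holomorphic change of coordinate $w^{*}=w+{\mathcal S}(z)$ is biholomorphic near $p$ and yields $2{\rm Re}(w^{*})=2{\rm Re}(w)+2{\rm Re}({\mathcal S}(z))=r$; hence in the coordinate $(z,w^{*})$ the defining function is $2{\rm Re}(w^{*})$, i.e.\ $F^{*}\equiv 0$ is flat.

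The heart of the argument is $(1)\Rightarrow(3)$. Write the given curve as $\gamma(t)=(z(t),w(t))$ with convergent components vanishing at $t=0$. Using the pure‑term expansion of $F$ and the fact that $z(t)$ is holomorphic, the Taylor series of $(r\circ\gamma)(t)$ in $(t,\bar t)$ splits into a part depending only on $t$, namely $H(t):=w(t)+\sum_{j\ge 2}c_j z(t)^j$, together with its complex conjugate $\overline{H(t)}$, and there are no genuinely mixed monomials. The infinite‑order tangency $(r\circ\gamma)(t)=O(|t|^N)$ for all $N$ then forces every Taylor coefficient of $H(t)+\overline{H(t)}$ to vanish; since the coefficient of $t^{k}$ is the $k$‑th coefficient of $H$, I conclude $H\equiv 0$ as a formal power series, i.e.\ $w(t)=-\sum_{j\ge 2}c_j z(t)^j$.

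It remains to extract convergence of ${\mathcal S}$ from this formal identity, and this inversion is the step I expect to be the main obstacle. First I would observe that $\gamma\in\Gamma^{\rm reg}$ forces $z(t)$ to have order $1$: if $z'(0)=0$, the identity just derived shows $w(t)$ has order $\ge 4$, so $\gamma'(0)=0$, contradicting regularity. Hence $z(t)$ admits a convergent inverse $t=\tau(z)$ with $\tau(0)=0$, and substituting $t=\tau(z)$ turns $z(t)^{j}$ into $z^{j}$, giving ${\mathcal S}(z)=\sum_{j\ge 2}c_j z^{j}=-w(\tau(z))$. The right‑hand side is a composition of two convergent power series, hence convergent, so ${\mathcal S}$ has positive radius of convergence and $(3)$ holds. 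The delicate point to verify carefully is precisely that the manifest convergence of the curve component $w(t)$ is transported, through the convergent reparametrization $\tau$, into convergence of the a priori merely formal series ${\mathcal S}$.
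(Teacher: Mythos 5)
Your proposal is correct and follows essentially the same route as the paper: Lemma~5.4 reduces $F$ to pure terms, the infinite-order tangency forces the $w$-component of the curve to agree coefficientwise with $-{\mathcal S}$ composed with the $z$-component (the paper normalizes $\gamma(t)=(t,-h(t))$ at the outset, while you keep $z(t)$ general and invert it at the end --- the same reparametrization, just performed later), and (iii)$\Rightarrow$(ii) is the same shift of $w$ by ${\mathcal S}(z)$. One small overstatement in your (3)$\Rightarrow$(2): the identity $2{\rm Re}(w^{*})=r$ holds only modulo a flat error, since $F=2{\rm Re}({\mathcal S}(z))$ only at the level of Taylor series, so the correct conclusion is that $F^{*}=F-2{\rm Re}({\mathcal S}(z))$ is flat rather than identically zero --- which is exactly what condition (ii) requires.
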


\begin{proof}
(ii) $\Longrightarrow$ (i) and 
(ii) $\Longrightarrow$ (iii) are obvious.  

First, let us show (iii) $\Longrightarrow$ (ii).
From (iii), 
the power series 
${\mathcal S}(z)$ 
can be 
regarded as a holomorphic function defined
on some open neighborhood of $z=0$.
Putting $w^*=w-{\mathcal S}(z)$, 
we can express the hypersurface $M$ on the good coordinate
$(z,w^*)$ by the equation
$
2{\rm Re}(w^*)+F^*(z,\bar{z})=0$, 
where $
F^*(z,\bar{z})=
F(z,\bar{z})-2{\rm Re}({\mathcal S}(z))$. 
Since $F(z)-2{\rm Re}({\mathcal S}(z))$ is flat at $z=0$, 
the existence of coordinate in (ii) is shown. 

Next, let us show (i) $\Longrightarrow$ (iii).
We may assume that the regular holomorphic curve 
in (i) can be expressed as 
$\gamma(t)=(t,-h(t))$ 
on a good coordinates $(z,w)$ satisfying that 
$(r\circ \gamma)(t)=O(t^N)$ for every $N\in\N$. 
Here $h$ is holomorphic  near the origin 
and satisfies $h(0)=0$.
Let $\sum_{j=1}^{\infty} a_j t^j$ be 
the Taylor series of $h$ at $t=0$, 
which converges on an open neighborhood of $t=0$.
Now, after the first finite sum of the Taylor series 
of $h$ and $F$
are substituted into the equations
$(r\circ \gamma)(t)=
-2{\rm Re}(h(t))+F(t,\bar{t})=0$, we have
\begin{equation*}
{\rm Re}\left(
\sum_{j=1}^{N}a_j t^j 
\right)-
\sum_{j=2}^{N} {\rm Re}(c_j t^j)=O(t^{N+1}),
\end{equation*}
for every $N\in\N$. 
From the above equality, we can see that 
$c_j=a_j$ for every $j\in\N$. 
This means that ${\mathcal S}(t)$ converges 
and ${\mathcal S}(t)=h(t)$
on an open neighborhood of $t=0$.
\end{proof}

\begin{remark}
The condition (iii) in Theorem~5.5 is weaker
than the condition: $M$ is real analytic near $p$.
Indeed, as mentioned in the Introduction, 
the real analyticity of $M$ implies the existence
of desired curves in (i) in the theorem 
(\cite{Lem86}, \cite{Dan93}, \cite{FoS12}). 
The hypersurface $M$ defined by 
$2{\rm Re}(w)+e^{-1/|z|^2}=0$ 
with $p=(0,0)$ satisfies the conditions (ii) and (iii),
but $M$ is not real analytic at $p$. 
\end{remark}

In \cite{BlG77}, \cite{KiN15}, \cite{NgC19}, 
smooth real hypersurfaces in $\C^2$ 
admitting no curve $\gamma_{\infty}$ in 
(\ref{eqn:1.2}) are constructed.
It follows from Theorem~5.5 that
there are many such hypersurfaces.

\begin{corollary}
Let $M$ be a real hypersurface defined by 
the equation (\ref{eqn:5.2}). 
If $\Delta_1(M,p)=\infty$,  
then the following two conditions are equivalent. 
\begin{enumerate}
\item 
There is no $\gamma\in\Gamma$ 
tangent to $M$ at $0$ to infinite order; 
\item 
The convergence radius of the power series
${\mathcal S}(z)$ equals zero.
\end{enumerate}
\end{corollary}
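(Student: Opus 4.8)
The plan is to obtain the corollary as a contrapositive reformulation of Theorem~5.5, using Lemma~5.2 to pass from arbitrary holomorphic curves to regular ones. The hypotheses match exactly: $M$ is given in the model form (\ref{eqn:5.2}), so it admits a good coordinate, and $\Delta_1(M,p)=\infty$ is assumed; these are precisely the standing hypotheses of Theorem~5.5.

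First I would observe that condition (1) of the corollary is the negation of condition (i) in Theorem~5.5. Indeed, (i) asserts the existence of a \emph{regular} curve $\gamma\in\Gamma^{\rm reg}$ tangent to $M$ at $p$ to infinite order, whereas (1) denies the existence of \emph{any} $\gamma\in\Gamma$ tangent to infinite order. These are complementary because of Lemma~5.2: in $\C^2$, every $\gamma\in\Gamma$ tangent to $M$ to infinite order automatically lies in $\Gamma^{\rm reg}$. Hence a curve in $\Gamma$ tangent to infinite order exists if and only if a curve in $\Gamma^{\rm reg}$ tangent to infinite order exists, so the absence of the former, condition (1), is exactly $\neg$(i). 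Here the two-dimensionality is essential, since Lemma~5.2 is what powers the reduction.

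Next I would invoke the equivalence (i) $\Leftrightarrow$ (iii) of Theorem~5.5. Negating both sides gives $\neg$(i) $\Leftrightarrow$ $\neg$(iii). Now $\neg$(iii) states that there is \emph{no} good coordinate on which $\mathcal{S}(z)$ has positive convergence radius; equivalently, on every good coordinate the convergence radius of $\mathcal{S}(z)$ equals zero, which is condition (2). Chaining this with the previous paragraph yields (1) $\Leftrightarrow$ (2).

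The only point requiring care is the quantifier hidden in condition (2). Since Theorem~5.5(iii) is existential, its negation is the universal statement ``on every good coordinate the radius of $\mathcal{S}$ vanishes.'' To read (2) as a single unambiguous invariant of $M$ at $p$, I would note that positivity of the convergence radius of $\mathcal{S}$ is independent of the chosen good coordinate: the changes of good coordinate preserving the model form (\ref{eqn:5.2}) act on $\mathcal{S}$ essentially by composition with a convergent biholomorphism of the $z$-variable, or by addition of a convergent series $g(z)$ coming from a shift $w^{*}=w-g(z)$ (as in the proof of Theorem~5.5), and neither operation can turn a divergent power series into a convergent one or conversely. Hence ``the convergence radius of $\mathcal{S}(z)$ equals zero'' is well defined, and the equivalence (1) $\Leftrightarrow$ (2) follows. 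The main, and rather mild, obstacle is therefore not analytic but organizational: correctly matching the existential and universal quantifiers across the negation and justifying this coordinate-independence of condition (2).
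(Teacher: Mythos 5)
Your route is the paper's own: the corollary is recorded there without a separate proof precisely because it is the negation of the equivalence (i) $\Leftrightarrow$ (iii) of Theorem~5.5, with Lemma~5.2 (equivalently, the implication (4) $\Rightarrow$ (3) in Proposition~5.1) converting ``no $\gamma\in\Gamma$ tangent to infinite order'' into ``no $\gamma\in\Gamma^{\rm reg}$ tangent to infinite order''. Your first two paragraphs carry this out correctly, and you are also right that the only delicate point is the quantifier over good coordinates hidden in comparing condition (iii) of the theorem with condition (ii) of the corollary.

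The one step that is not sound as written is your repair of that quantifier. You assert, without proof, that every transition between good coordinates in the model form (\ref{eqn:5.2}) acts on ${\mathcal S}$ ``essentially'' by a convergent biholomorphism of the $z$-variable together with a shift $w^{*}=w-g(z)$. That classification is false in general: for the Levi-flat model $2{\rm Re}(w)=0$ the map $(z,w)\mapsto (z,\,w+w^{3})$ carries the model form to the model form but is not of the stated split shape, and nothing in your argument excludes such $w$-dependence. Fortunately the classification is unnecessary. In the corollary the coordinate is fixed by (\ref{eqn:5.2}), so condition (ii) refers to that coordinate, and the only direction your worry affects is: existence of a tangent curve $\Rightarrow$ positive radius of ${\mathcal S}$ \emph{in the fixed coordinate}. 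This follows by running the proof of (i) $\Rightarrow$ (iii) of Theorem~5.5 in that very coordinate rather than quoting the statement: if $\gamma=(\gamma_1,\gamma_2)\in\Gamma$ is tangent to $M$ at $0$ to infinite order, Lemma~5.2 makes it regular; since ${\rm ord}(F)\geq 2$, the condition $\gamma_2'(0)\neq 0$ would force ${\rm ord}(r\circ\gamma)=1$, so necessarily $\gamma_1'(0)\neq 0$ and, after reparametrization, $\gamma(t)=(t,-h(t))$ with $h\in{\mathcal O}_0(\C)$, $h(0)=0$, in the given coordinate; the coefficient comparison in the proof of the theorem then yields $c_j=a_j$ for all $j$, i.e.\ ${\mathcal S}=h$ converges there. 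With that substitution in place of the classification claim, your argument is complete and agrees with the paper's.
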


\begin{remark}
From (ii) in the above corollary, 
it is quite easy to construct smooth hypersurfaces 
satisfying the condition (i). 
Let $\{c_j\}_{j\in\N}$ be a sequence of 
complex numbers such that
the convergence radius of 
$\sum_{j=2}^{\infty}c_j z^j$ 
is zero.
By using a classical lemma of E. Borel 
(cf. \cite{Nar85}, Theorem~1.5.4,  
or  \cite{Hor90}, Theorem~1.2.6), 
for the formal power series of $(x,y)\in\R^2$ 
($z=x+iy$):
\begin{equation}\label{eqn:5.3}
\sum_{j=2}^{\infty} {\rm Re}(c_j z^j)
=\sum_{(j,k)\in\Z_+^2}
C_{jk}x^j y^k \mbox{\,\, with $C_{jk}\in\R$}, 
\end{equation}
there exists a real-valued $C^{\infty}$ function $f$
defined near the origin in $\R^2$ 
whose Taylor series at the origin is 
(\ref{eqn:5.3}).
Then the real hypersurface defined by 
$2{\rm Re}(w)+f(z,\bar{z})=0$ 
satisfies the condition (i). 
We remark that the above hypersurface cannot always be 
uniquely determined from the sequence
$\{c_j\}_{j\in\N}$. 

In \cite{KiN15}, \cite{NgC19}, 
similar examples of hypersurfaces have been found. 
Since their constructions 
do not use a lemma of E. Borel directly, 
they look much more elaborate. 

Furthermore, 
it is shown in \cite{FoN20} that
when $\{c_j\}_{j\in\N}$ is an increasing sequence of positive real
numbers, the above function $f$ can be selected to be
a subharmonic function on $\C$. 
Therefore, there are many pseudoconvex real hypersurfaces
satisfying the condition (i) in the corollary. 
\end{remark}


\section{Higher dimensional case}

In this section, we generalize results 
given in the previous section to the higher dimensional case.
Let $M$ be a real hypersurface in $\C^{n+1}$ 
($n\geq 1$)
and let $p$ lie in $M$.
In this section, we always consider the case when
$M$ is of Bloom-Graham infinite type at $p$.
First, let us recall the exact definition 
of the Bloom-Graham type in \cite{BlG77}.
We remark that the following definition
is an equivalence condition for their original type.
This equivalence is also shown in \cite{BlG77}.

\begin{definition}
Let ${\mathcal X}$ be a set of 
$n$-dimensional complex submanifolds 
containing $p$. 
We say that 
the Bloom-Graham type of $M$ is $m$ $(<\infty)$,
if there is a $X\in{\mathcal X}$ tangent to $M$ 
at $p$ to order $m$ but no $X\in{\mathcal X}$ 
tangent to a higher order.
Otherwise, we say that 
the Bloom-Graham type of $M$ is infinity at $p$
(see the condition (7) in Proposition~1.1).
\end{definition}

In the case of Bloom-Graham infinity type,
a similar property to Lemma~5.4 can be seen.
\begin{lemma}
If the Bloom-Graham type of $M$ at $p$ is infinity, then
the Taylor series of $F$ in (\ref{eqn:3.1}) 
at the origin consists of only pure terms
on every standard coordinate (see Section~3). 
\end{lemma}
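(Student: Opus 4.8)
The plan is to argue by contraposition, in close parallel with the proof of Lemma~5.4. Thus I assume that on some standard coordinate the Taylor series of $F$ in (\ref{eqn:3.1}) contains a mixed term, and I aim to show that the Bloom-Graham type of $M$ at $p$ is then finite, contrary to the hypothesis. Let $m\geq 2$ be the minimal order among the mixed terms of $F$, and let $p_m(z,\bar z)$ denote the sum of the mixed monomials of order $m$; by construction $p_m\not\equiv 0$ and $p_m$ contains no pure monomial.

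First I would reduce, exactly as in Lemma~5.4, to a standard coordinate $(z,w^*)$ on which $M$ is defined by
\begin{equation*}
2{\rm Re}(w^*)+p_m(z,\bar z)+Q(z,\bar z)+R_1^*(z,\bar z){\rm Im}(w^*)+R_2^*=0,
\end{equation*}
where $\ord(Q)\geq m+1$ and $R_1^*,R_2^*$ have the properties listed after (\ref{eqn:3.1}). Concretely this is achieved by a holomorphic shift $w\mapsto w+g(z)$ killing the pure part of $F$ up to order $m$, so that the lowest-order inhomogeneous part of the new $F$ is the purely mixed polynomial $p_m$.

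Next I would read off the Bloom-Graham type in this coordinate. Since $T_p^{1,0}M=\{w^*=0\}$, and since we are interested in tangency of arbitrarily high order, it suffices to consider $n$-dimensional complex submanifolds of the form $X=\{w^*=\phi(z)\}$ with $\phi$ holomorphic and $\ord(\phi)\geq 2$; the order of tangency of $X$ to $M$ at $p$ is $\ord(r|_X)$, where
\begin{equation*}
r|_X=2{\rm Re}(\phi)+p_m+Q+R_1^*{\rm Im}(\phi)+R_2^*(z,\phi,\bar z,\bar\phi).
\end{equation*}
I claim $\ord(r|_X)\leq m$ for every such $X$, which forces the Bloom-Graham type to equal $m<\infty$. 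Suppose instead $\ord(r|_X)>m$. An induction on $d$ then shows $\phi_d=0$ for $d<m$: once $\phi_1=\cdots=\phi_{d-1}=0$ one has $\ord(R_1^*{\rm Im}(\phi))\geq d+1$, $\ord(R_2^*)\geq 2d$ and $\ord(p_m+Q)\geq m$, so the degree-$d$ homogeneous part of $r|_X$ is just $2{\rm Re}(\phi_d)$, a harmonic polynomial, which vanishes only if $\phi_d=0$. Hence $\ord(\phi)\geq m$, so that $R_1^*{\rm Im}(\phi)$, $R_2^*$ and $Q$ all contribute only above degree $m$, and the degree-$m$ part of $r|_X$ equals $2{\rm Re}(\phi_m)+p_m$. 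Being the sum of a harmonic polynomial and the nonzero purely mixed polynomial $p_m$, this is not identically zero, contradicting $\ord(r|_X)>m$.

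I expect the main obstacle to lie in the two places where the non-model terms $R_1^*{\rm Im}(w^*)$ and $R_2^*$ enter. A priori these terms can produce mixed monomials of order $m$ and might seem able to cancel $p_m$; the point that rules this out is the forced vanishing $\phi_1=\cdots=\phi_{m-1}=0$, which pushes every contribution of $R_1^*{\rm Im}(\phi)$ and $R_2^*$ strictly above degree $m$ and so leaves the obstruction $p_m$ intact. The second, more delicate, point is the legitimacy of the reduction itself, namely that the pure part of $F$ can be removed without creating mixed terms of order $<m$, so that the leading term genuinely remains the mixed polynomial $p_m$ of finite order; this plays the role of the verification that $2{\rm Re}(w^*)+p_m$ is ${\mathcal N}$-nondegenerate in the two-dimensional argument, and is where the minimality of $m$ together with the constructions of Section~2 must be used.
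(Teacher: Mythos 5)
Your proposal is correct and follows essentially the same route as the paper's own proof: argue by contradiction, pass (via the shift $w^*=w+g(z)$) to a standard coordinate in which $M$ is given by $2{\rm Re}(w^*)+P_m(z,\bar z)+Q(z,\bar z)+R_1^*\cdot{\rm Im}(w^*)+R_2^*=0$ with $P_m$ a nonzero mixed homogeneous polynomial of degree $m$ and $\ord(Q)\geq m+1$, and conclude that the Bloom--Graham type equals $m<\infty$. The only difference is one of detail: where the paper compresses the last step into ``it follows from the definition,'' you verify it explicitly via graphs $X=\{w^*=\phi(z)\}$ and the induction forcing $\phi_1=\cdots=\phi_{m-1}=0$ (so that $R_1^*\,{\rm Im}(\phi)$ and $R_2^*$ contribute only above degree $m$, leaving $2{\rm Re}(\phi_m)+P_m\not\equiv 0$), which is precisely the implicit content of the paper's claim.
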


\begin{proof}
Let us assume the existence of 
a standard coordinate $(z,w)$ 
on which 
the Taylor series of $F$  
contains a mixed term. 
Let $m$ be the minimum order of the 
mixed terms of Taylor series of $F$.

It is easy to construct 
a new standard coordinate $(z,w^*)$
on which $M$ can be expressed by the equation
\begin{equation*}
2{\rm Re}(w^*)+P_m(z,\bar{z})+Q(z,\bar{z})+
R_1^*\cdot {\rm Im}(w^*)
+R_2^*=0,
\end{equation*}
where 
$P_m$ is a non-zero mixed homogenous polynomial of degree $m$ 
without pure terms, 
$Q\in C_0^{\infty}(\C^n)$ with 
${\rm ord}(Q)\geq m+1$ and 
$R_1^*,R_2^*$ have the same properties as those of 
$R_1,R_2$ in (\ref{eqn:3.1}), respectively. 
It follows from the definition that the Bloom-Graham type
of $M$ at $p$ equals $m<\infty$, which is a contradiction.
\end{proof}

From Lemma~6.2, 
the Taylor series of $F$ at the origin in $\C^n$ 
can be expressed as 
$2\sum_{\alpha\in\Z_+^n} 
{\rm Re}(c_{\alpha} z^{\alpha})$
where $c_{\alpha}\in\C$. 
When the hypersurface $M$ and $p\in M$ 
are fixed, 
the sequence $\{c_{\alpha}\}_{\alpha\in\Z_+^n}$ 
is determined by 
the chosen standard coordinates $(z,w)$. 
For a given coordinate $(z,w)$, 
we define the formal power series
\begin{equation}\label{eqn:6.1}
{\mathcal S}(z)=
\sum_{\alpha\in\Z_+^n} c_{\alpha} z^{\alpha}
\quad (z\in\C^n).
\end{equation} 

Hereafter in this section, 
we only consider the case when 
the hypersurface admits a good coordinate at $p$. 
That is to say, $M$ can be expressed as in the model form
\begin{equation}\label{eqn:6.2}
r(z,w,\bar{z},\bar{w})=2{\rm Re}(w)+F(z,\bar{z})=0,
\end{equation}
where $F$ is the same as in (\ref{eqn:3.1}). 

Let $\hat{\Gamma}$ be the set 
of (germs of) nonconstant holomorphic curves
$\hat{\gamma}=
(\hat{\gamma}_1,\ldots,\hat{\gamma}_n)
:(\C,0) \to (\C^n,0)$. 
Let $\sum_{k=1}^{\infty} a_{jk} t^k$ be 
the Taylor series of $\hat{\gamma}_j$ 
for $j=1,\ldots,n$.
After these Taylor series are substituted into 
${\mathcal S}(z_1,\ldots,z_n)$ with 
$z_j=\hat{\gamma}_j(t)$, 
a formal computation gives the formal power series, 
denoted by $({\mathcal S}\circ\hat{\gamma})(t)$, 
in the following.
\begin{equation}\label{eqn:6.3}
({\mathcal S}\circ\hat{\gamma})(t)
={\mathcal S}
(\hat{\gamma}_1(t),\ldots,\hat{\gamma}_n(t))
=\sum_{\alpha\in\Z_+^n} c_{\alpha}
\prod_{j=1}^n 
\left(
\sum_{k=1}^{\infty}a_{jk} t^k
\right)^{\alpha_j}
=:\sum_{j=2}^{\infty}c_j t^j.
\end{equation}
The relationship between $F$ and ${\mathcal S}$ 
implies that
\begin{equation}\label{eqn:6.4}
(F\circ\hat{\gamma})(t)=
2\sum_{j=2}^{N} {\rm Re}(c_j t^j)+O(t^{N+1}),
\end{equation}
for every $N\in\N$.

The following theorem is a natural generalization 
of Theorem~5.6. 
Let $\Gamma$ denote the set of nonconstant 
holomorphic mappings 
$\gamma:(\C,0) \to (\C^{n+1},p)$. 

\begin{theorem}
Let $M$ be a real hypersurface in $\C^{n+1}$ ($n\geq 1$)
admitting a good coordinate at $p$ as in (\ref{eqn:6.2}). 
If the Bloom-Graham type of $M$ at $p$ is infinity, then
the following three conditions are equivalent.
\begin{enumerate}
\item 
There exists a $\gamma\in\Gamma$
tangent to $M$ at $p$ to infinite order; 
\item 
There exists a good coordinate $(z,w)$ 
for $M$ at $p$ on which
$(F\circ\hat{\gamma})(t)$ is flat at $t=0$ for some 
$\hat{\gamma}\in\hat{\Gamma}$; 
\item
There exists a good coordinate $(z,w)$ 
for $M$ at $p$ such that the formal power series 
$({\mathcal S}\circ\hat{\gamma})(t)$ 
in (\ref{eqn:6.3})
converges on an open neighborhood of $t=0$
for some $\hat{\gamma}\in\hat{\Gamma}$, 
where 
${\mathcal S}$ is as in (\ref{eqn:6.1}).
\end{enumerate}
\end{theorem}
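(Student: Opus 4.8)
The plan is to establish the cyclic chain of implications (ii) $\Rightarrow$ (i) $\Rightarrow$ (iii) $\Rightarrow$ (ii), in parallel with the two-dimensional Theorem~5.6. Throughout I write a curve $\gamma\in\Gamma$ as $\gamma=(\hat{\gamma},g)$ with $\hat{\gamma}=(\gamma_1,\dots,\gamma_n)$ and $g=\gamma_{n+1}\in{\mathcal O}_0(\C)$, so that on a good coordinate $(r\circ\gamma)(t)=2\,{\rm Re}(g(t))+(F\circ\hat{\gamma})(t)$. The single computational fact I will use repeatedly is relation~\eqref{eqn:6.4}, namely that the Taylor series of $(F\circ\hat{\gamma})(t)$ equals $2\,{\rm Re}(({\mathcal S}\circ\hat{\gamma})(t))$.

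For (ii) $\Rightarrow$ (i) I simply take $\gamma=(\hat{\gamma},0)$; then $(r\circ\gamma)(t)=(F\circ\hat{\gamma})(t)$ is flat by hypothesis and $\gamma$ is nonconstant because $\hat{\gamma}$ is. For (i) $\Rightarrow$ (iii) I write $g(t)=\sum_{k\ge1}b_kt^k$ and $({\mathcal S}\circ\hat{\gamma})(t)=\sum_{j\ge2}c_jt^j$; by \eqref{eqn:6.4} the Taylor series of $(r\circ\gamma)(t)$ is $2\,{\rm Re}(b_1t)+2\sum_{j\ge2}{\rm Re}((b_j+c_j)t^j)$. Since the homogeneous harmonic polynomials ${\rm Re}(d\,t^j)$ are linearly independent across degrees and vanish to order exactly $j$ when $d\neq0$, tangency to infinite order forces $b_1=0$ and $b_j=-c_j$ for all $j\ge2$; hence ${\mathcal S}\circ\hat{\gamma}=-g$, which converges. (If $\hat{\gamma}\equiv0$ then $F\circ\hat{\gamma}\equiv0$ and the same argument forces $g\equiv0$, contradicting nonconstancy, so indeed $\hat{\gamma}\in\hat{\Gamma}$.) This yields (iii) on the same coordinate.

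The heart of the matter is (iii) $\Rightarrow$ (ii), and here lies the main obstacle. In the two-dimensional case one substitutes $w^*=w-{\mathcal S}(z)$ because ${\mathcal S}$ is a convergent function of the single variable $z$; in higher dimension ${\mathcal S}(z)$ is only a formal power series and may diverge, while merely its restriction $\psi:=({\mathcal S}\circ\hat{\gamma})$ is assumed convergent. The plan is therefore to produce a \emph{genuine} holomorphic germ $H\in{\mathcal O}_0(\C^n)$ with $\ord(H)\ge2$ and $H\circ\hat{\gamma}=\psi$, and then to set $w^*=w+H(z)$. Because $H$ has no constant or linear part, the new defining function $2\,{\rm Re}(w^*)+F^*$ with $F^*=F-2\,{\rm Re}(H)$ is again a good coordinate, and by \eqref{eqn:6.4} one has $(F^*\circ\hat{\gamma})(t)=(F\circ\hat{\gamma})(t)-2\,{\rm Re}(\psi(t))$, which is flat, giving (ii).

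Everything thus reduces to solving $H\circ\hat{\gamma}=\psi$ with $H$ vanishing to second order, and this is the step I expect to be delicate. The idea is to exploit the conductor of the curve germ. Since the parametrization is good, $\hat{\gamma}$ is birational onto its irreducible image, so ${\mathcal O}_0(\C)$ is a finite module over the subring $A:=\hat{\gamma}^{*}{\mathcal O}_0(\C^n)$ and there is $c$ with $t^{c}{\mathcal O}_0(\C)\subseteq A$. Writing $k=\ord(\hat{\gamma})$, the image $\hat{\gamma}^{*}{\mathfrak m}^2$ (where ${\mathfrak m}$ is the maximal ideal of ${\mathcal O}_0(\C^n)$) has order $2k$ and, being an $A$-submodule containing an element of order $2k$, it contains $t^{2k+c}{\mathcal O}_0(\C)$. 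Now ${\mathcal S}\in{\mathfrak m}^2$ formally, its truncations $H_m=\sum_{2\le|\alpha|\le m}c_\alpha z^\alpha$ lie in ${\mathfrak m}^2$, and $\hat{\gamma}^{*}H_m$ converges coefficientwise to $\psi$; as only finitely many coefficients occur below order $2k+c$ they stabilize, so $\psi-\hat{\gamma}^{*}H_m\in t^{2k+c}{\mathcal O}_0(\C)\subseteq\hat{\gamma}^{*}{\mathfrak m}^2$ for large $m$, whence $\psi\in\hat{\gamma}^{*}{\mathfrak m}^2$. This produces the desired $H$ and closes the cycle. The finiteness and conductor input is the essential new ingredient beyond the two-dimensional argument, where $\hat{\gamma}$ is automatically regular and no such curve-singularity analysis is needed.
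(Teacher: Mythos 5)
Your proof is correct, and it splits cleanly against the paper's: your arguments for (ii) $\Rightarrow$ (i) and (i) $\Rightarrow$ (iii) coincide with the paper's (same curve $(\hat{\gamma},0)$, same coefficient comparison $c_j=-b_j$ via (\ref{eqn:6.4})), but your (iii) $\Rightarrow$ (ii) is genuinely different. The paper sets $I=\{j:\hat{\gamma}_j\not\equiv 0\}$, invokes Abel's lemma to get convergence of the restricted series $({\mathcal S}\circ T_I)(z)$ near $z=0$, and changes coordinates by $w^*=w-({\mathcal S}\circ T_I)(z)$, using ${\mathcal S}\circ T_I\circ\hat{\gamma}={\mathcal S}\circ\hat{\gamma}$. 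You never produce a convergent sub-series of ${\mathcal S}$ at all: you solve the interpolation problem $H\circ\hat{\gamma}={\mathcal S}\circ\hat{\gamma}$ with $H\in{\mathfrak m}^2$ holomorphic, via the conductor $t^c\C\{t\}\subseteq\hat{\gamma}^*{\mathcal O}_0(\C^n)$, where primitivity (the paper's ``good parametrization'' convention, which you rightly flag) supplies finiteness of the normalization. What each buys: the paper's route is shorter and, when it applies, yields more (an actually convergent restriction of ${\mathcal S}$ and an explicit substitution); but, read literally, it must deduce boundedness of the individual terms $c_\alpha\hat{\gamma}(t_0)^\alpha$ from convergence of the collected one-variable series, which can fail under cancellation --- e.g.\ $n=2$, $\hat{\gamma}(t)=(t,t)$, $c_{(j,0)}=j!=-c_{(0,j)}$ gives ${\mathcal S}\circ\hat{\gamma}\equiv 0$ convergent while ${\mathcal S}\circ T_I={\mathcal S}$ diverges; your conductor argument is insensitive to such cancellation (there $H=0$ works), so it is more robust exactly at the paper's most delicate step, at the cost of curve-singularity machinery and the small reduction, worth recording, that a non-primitive $\hat{\gamma}=\delta\circ(t\mapsto t^d)$ may be replaced by $\delta$ since $({\mathcal S}\circ\hat{\gamma})(t)=({\mathcal S}\circ\delta)(t^d)$.
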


\begin{proof}
First, let us show (ii) $\Longrightarrow$ (i).
Let $\gamma(t)=(\hat{\gamma}(t),0)\in\Gamma$,  
where $\hat{\gamma}$ is as in (ii),
and let $r$ be a defining function for $M$ 
as in (\ref{eqn:6.2}).
Then we have
$
(r\circ \gamma)(t)=(F\circ\hat{\gamma})(t) 
=O(t^N)$
for every $N\in\N$, which implies (i). 

Second, let us show (iii) $\Longrightarrow$ (ii).
Let $I:=\{j:\hat{\gamma}_j\not\equiv 0\}$.
Define the map $T_{I}:\C^n\to\C^n$ 
by $(w_1,\ldots,w_n)=T_I(z_1,\ldots,z_n)$
where $w_j=z_j$ if $j\in I$ and $w_j=0$ otherwise.
It follows from Abel's lemma (cf. \cite{FrG02}) that 
the convergence of 
$({\mathcal S}\circ\hat{\gamma})(t)$ for some $t\neq 0$
implies that the power series $({\mathcal S}\circ T_I)(z)$
converges on an open neighborhood of $z=0$, 
which means that ${\mathcal S}\circ T_I$
can be regarded as a holomorphic function there. 
Letting $w^*=w-({\mathcal S}\circ T_I)(z)$, 
we can express the hypersurface $M$ on the good coordinate
$(z,w^*)$ by the equation:
$
2{\rm Re}(w^*)+F^*(z,\bar{z})=0$,
where 
$F^*(z,\bar{z})=F(z,\bar{z})-
2{\rm Re}(({\mathcal S}\circ T_I)(z))$. 
By using the equality
${\mathcal S}\circ T_I \circ \hat{\gamma}=
{\mathcal S}\circ \hat{\gamma}$, 
$(F^*\circ\hat{\gamma})(t)=(F\circ\hat{\gamma})(t)-
2{\rm Re}({\mathcal S}\circ\hat{\gamma})(t)$ 
is flat at $t=0$, 
which implies (ii). 

Third, let us show (i) $\Longrightarrow$ (iii).
We may assume that a holomorphic curve 
in (i) can be expressed as 
$\gamma(t)=(\hat{\gamma}(t),-h(t))$, 
where $\hat{\gamma}\in\hat{\Gamma}$ and 
$h\in{\mathcal O}_0(\C)$ 
with $h(0)=0$,
on a good coordinates $(z,w)$.
Note that (i) is equivalent to the condition
$(r\circ \gamma)(t)=O(t^N)$ for every $N\in\N$. 
Let $\sum_{j=1}^{\infty} a_j t^j$ be 
the Taylor series of $h$ at $t=0$, 
which converges on an open neighborhood of $t=0$.
Now, after the first finite sum of the Taylor series 
of $h$ and $F\circ\hat{\gamma}$ in (\ref{eqn:6.4})
are substituted into the equations
$(r\circ \gamma)(t)=
-2{\rm Re}(h(t))+
(F\circ\hat{\gamma})(t)=
O(t^N)$ for every $N\in\N$, 
we have
\begin{equation*}
{\rm Re}\left(
\sum_{j=1}^{N}a_j t^j 
\right)-
\sum_{j=2}^{N} {\rm Re}(c_j t^j)=O(t^{N+1}),
\end{equation*}
for every $N\in\N$. 
From the above equality, we can see that 
$c_j=a_j$ for every $j\in\N$. 
This means that $({\mathcal S}\circ\hat{\gamma})(t)$ 
converges 
and $({\mathcal S}\circ\hat{\gamma})(t)=h(t)$
on an open neighborhood of $t=0$.

\end{proof}


In \cite{FoN20}, 
smooth pseudoconvex real hypersurfaces in $\C^{n+1}$ 
of Bloom-Graham infinite type 
admitting no curve $\gamma_{\infty}$ in 
(\ref{eqn:1.2}) are constructed.
It follows from Theorem~6.3 that
many such hypersurfaces can be easily constructed. 

\begin{corollary}
Let $M$ be a real hypersurface defined by 
the equation (\ref{eqn:6.2}). 
If the Bloom-Graham type of $M$ at $0$ is infinity, 
then the following three conditions are equivalent. 
\begin{enumerate}
\item 
There is no $\gamma\in\Gamma$ 
tangent to $M$ at $0$ to infinite order; 
\item
For all $\hat{\gamma}\in\hat{\Gamma}$, 
the formal power series 
$({\mathcal S}\circ \hat{\gamma})(t)$ does
not converge at any point on a delated open neighborhood 
of $t=0$;
\item
The formal power series 
${\mathcal S}(z)$ does not converge
at any point on a delated open neighborhood 
of $z=0$.
\end{enumerate}
\end{corollary}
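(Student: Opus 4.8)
The plan is to read the corollary as the contrapositive of Theorem~6.3 with one extra, purely analytic, equivalence attached, and to supply that extra equivalence (ii)$\Longleftrightarrow$(iii) by hand using Abel's lemma. Throughout I fix the good coordinate of (\ref{eqn:6.2}), so that $\mathcal S$ and the composed series $(\mathcal S\circ\hat\gamma)(t)$ of (\ref{eqn:6.3}) are the ones attached to that coordinate; the whole difficulty is to check that fixing this coordinate, rather than quantifying existentially over good coordinates as in Theorem~6.3, loses no information.

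First I would establish (i)$\Longleftrightarrow$(ii) by proving the two contrapositive implications through the intrinsic existence of an infinite-order tangent curve. For $\neg$(ii)$\Rightarrow\neg$(i): if some $\hat\gamma$ makes $(\mathcal S\circ\hat\gamma)(t)$ convergent in the fixed coordinate, then the fixed coordinate witnesses condition (iii) of Theorem~6.3, so the chain (iii)$\Rightarrow$(ii)$\Rightarrow$(i) of that proof (pass to $w^{*}=w-(\mathcal S\circ T_I)(z)$ and take $\gamma(t)=(\hat\gamma(t),0)$) produces a curve tangent to $M$ to infinite order, and (i) fails. For $\neg$(i)$\Rightarrow\neg$(ii): an infinite-order tangent curve, written on the fixed coordinate, takes the form $\gamma(t)=(\hat\gamma(t),-h(t))$ with $\hat\gamma\in\hat\Gamma$ and $h\in\mathcal O_0(\C)$, $h(0)=0$; here $\hat\gamma$ cannot vanish identically, for otherwise $2\mathrm{Re}(h(t))$ would be flat, forcing $h\equiv 0$ and $\gamma$ constant. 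The computation in the proof of Theorem~6.3 (the step (i)$\Rightarrow$(iii)) then yields $c_j=a_j$ for all $j$, hence $(\mathcal S\circ\hat\gamma)(t)=h(t)$ converges and (ii) fails. The point to stress is that this last computation never uses freedom in the choice of good coordinate, so it applies verbatim to the fixed coordinate of (\ref{eqn:6.2}).

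Next I would prove (ii)$\Longleftrightarrow$(iii), a statement about the formal series $\mathcal S$ alone. For $\neg$(iii)$\Rightarrow\neg$(ii): if $\mathcal S$ converges at some $z_0\neq 0$ near the origin, then by Abel's lemma it converges absolutely at $z_1:=\tfrac12 z_0$, i.e.\ $\sum_\alpha|c_\alpha z_1^{\alpha}|<\infty$; taking the line $\hat\gamma(t)=t\,z_1\in\hat\Gamma$ gives $(\mathcal S\circ\hat\gamma)(t)=\sum_\alpha c_\alpha z_1^{\alpha}t^{|\alpha|}$, whose coefficients are dominated in absolute value by $\sum_\alpha|c_\alpha z_1^{\alpha}|$, so it converges for $|t|\le 1$ and (ii) fails. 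For $\neg$(ii)$\Rightarrow\neg$(iii): if $(\mathcal S\circ\hat\gamma)(t)$ converges at some $t\neq 0$ for some $\hat\gamma$, then, exactly as in the (iii)$\Rightarrow$(ii) step of Theorem~6.3, Abel's lemma shows that $(\mathcal S\circ T_I)(z)$ converges on a full neighborhood of $z=0$, where $I=\{j:\hat\gamma_j\not\equiv 0\}\neq\emptyset$; since $\mathcal S$ and $\mathcal S\circ T_I$ agree on the coordinate subspace $\{z_j=0:j\notin I\}$, the series $\mathcal S$ converges at nonzero points of that subspace arbitrarily close to the origin, so (iii) fails.

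Combining the two equivalences finishes the proof. I expect the only genuinely delicate point to be the coordinate bookkeeping in (i)$\Longleftrightarrow$(ii): Theorem~6.3 is stated with ``there exists a good coordinate,'' whereas the corollary pins down the one in (\ref{eqn:6.2}), so one must verify that the relevant half of the theorem is coordinate-independent and route both implications through the coordinate-free existence of the curve. By contrast, (ii)$\Longleftrightarrow$(iii) is routine once Abel's lemma is invoked, the two directions being handled respectively by restricting $\mathcal S$ to a line through a point of convergence and by restricting it to the coordinate subspace determined by the support of $\hat\gamma$.
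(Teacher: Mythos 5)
Your proof is correct and takes essentially the same route as the paper: the paper's two-sentence proof likewise handles (ii) $\Rightarrow$ (iii) by Abel's lemma (your restriction of ${\mathcal S}$ to the line through a point of convergence is exactly the intended argument) and derives the remaining implications from Theorem~6.3 and its proof. Your extra bookkeeping --- checking $\hat{\gamma}\not\equiv 0$, and verifying that the (i) $\Rightarrow$ (iii) computation of Theorem~6.3 runs in the fixed coordinate of (\ref{eqn:6.2}) rather than only in some existentially quantified good coordinate --- simply makes explicit details the paper leaves implicit.
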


\begin{proof}
We remark that (ii) $\Longrightarrow$ (iii) can
be shown by using Abel's lemma 
(cf.  the proof of 
(iii) $\Longrightarrow$ (ii) in Theorem~6.3).   
The other implications 
can be directly obtained from Theorem~6.3.
\end{proof}

\begin{remark}
From (iii) in the above corollary, 
it is easy 
to construct smooth pseudoconvex hypersurfaces 
of Bloom-Graham infinite type 
satisfying the condition (i). 
One of simple examples of hypersurfaces is given 
by the equation
${\rm Re}(w)+
f_1(z_1,\bar{z}_1)+\cdots+f_n(z_n,\bar{z}_n)=0$,
where $f_j$ ($j=1,\ldots,n$) 
are subharmonic functions 
constructed in \cite{FoN20} 
(see also Remark~5.8).
The example constructed in \cite{FoN20}
takes the same form, but this example 
needs some additional conditions for each $f_j$. 

\end{remark}

Roughly speaking, 
when the flatness of hypersurfaces is stronger, 
it becomes easier to find the curve tangent to $M$ 
to higher order.
Thus, the following question seems to be
more difficult:
does there exist a smooth pseudoconvex real hypersurface 
in $\C^{n+1}$ ($n\geq 2$) 
of the Bloom-Graham {\it finite type} that 
admits no $\gamma\in\Gamma$ 
tangent to $M$ to infinite order?
The following theorem gives an affirmative answer.

\begin{theorem}
Let $n\geq 2$. 
There exists a smooth pseudoconvex real hypersurface 
$M$ in $\C^{n+1}$ with $\Delta_1(M,p)=\infty$ and
$\Delta_1^{\rm reg}(M,p)<\infty$
(in particular, $M$ is of Bloom-Graham finite type at $p$)
that 
admits no $\gamma\in\Gamma$
tangent to $M$ at $p$ to infinite order.
\end{theorem}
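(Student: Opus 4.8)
The plan is to exhibit the required hypersurface explicitly in a good coordinate, reducing first to $\C^3$ (the case $n=2$) and then suspending. For general $n\geq 2$ I would take
$$F(z,\bar z)=G(z_1,z_2,\bar z_1,\bar z_2)+\sum_{j=3}^{n}|z_j|^2,$$
where $G$ is the two–variable building block constructed below. The added term $\sum_{j\geq 3}|z_j|^2$ is plurisubharmonic, keeps $M$ pseudoconvex, forces $z_3\equiv\cdots\equiv z_n\equiv 0$ along any curve of infinite contact, and cannot lower the regular type below that of $G$; so it transports all three properties from $\C^3$ to $\C^{n+1}$. Everything thus reduces to producing $G$ on $\C^2$.

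For $G$ I would start from the cuspidal model $G_0=|z_2^2-z_1^3|^2$. Its Newton diagram is the single compact facet on the line $2\xi_1+3\xi_2=12$, so $G_0$ is convenient; it is ${\mathcal N}$-degenerate precisely along the weight-$(2,3)$ curves $(c_1t^2,c_2t^3)$ with $c_2^2=c_1^3$, and a direct check gives $\Delta_1^{\rm reg}(M,p)=6$ while $\Delta_1(M,p)=\infty$ is \emph{attained} along the cusp $t\mapsto(t^2,t^3)$. The whole point is to destroy attainment while preserving the rest. I would fix a sequence $\{a_j\}$ with zero radius of convergence and consider the divergent formal cuspidal curve $\hat\gamma_\infty(t)=(t^2,\,t^3+\sum_{j\geq 4}a_jt^j)$, which lies on no germ of analytic variety. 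The function $G$ is then built, via Borel's theorem and a Fornaess--Nguyen--type subharmonic realization (cf. Remark~5.8 and \cite{FoN20}), to be a smooth plurisubharmonic $G\geq 0$ whose weighted leading part is $|z_2^2-z_1^3|^2$ and which is formally flat to infinite order along $\hat\gamma_\infty$. Concretely, $G$ should realize $|z_2^2-z_1^3-z_2\,\eta(z_1)|^2$, where $\eta(z_1)=\sum_k e_kz_1^k$ is the divergent series whose branch is $\hat\gamma_\infty$; the terms $z_2\,\eta(z_1)$ sit strictly above the cusp line, so they do not disturb the facet.

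With $G$ in hand the three type computations are the heart. For $\Delta_1(M,p)=\infty$ I would use the honest curves $\gamma_N(t)=(t^2,\,t^3+\sum_{4\leq j\leq N}a_jt^j)\in\Gamma$, which are singular of order $2$ and agree with $\hat\gamma_\infty$ through degree $N$; since $G$ is flat along $\hat\gamma_\infty$, Taylor's formula gives $\ord(G\circ\gamma_N)\to\infty$, so the ratios $\ord(G\circ\gamma_N)/2\to\infty$ and $\Delta_1(M,p)=\infty$. For the absence of $\gamma_\infty$ I would argue as in Theorem~5.5 and Corollary~5.7: any convergent $\gamma$ with $G\circ\gamma$ flat must, after normalization, match $\hat\gamma_\infty$ to infinite order, forcing the divergent tail $\sum a_jt^j$ to converge along $\gamma$, which is impossible. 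For $\Delta_1^{\rm reg}(M,p)<\infty$ I would note that the ${\mathcal N}$-degeneracy lives only in the weight-$(2,3)$ (order-$2$) directions, so every regular curve meets the facet transversally and already $G_0$ forces $\ord(G\circ\gamma)\leq 6$; hence $\Delta_1^{\rm reg}(M,p)\leq 6$ and $M$ is of Bloom--Graham finite type at $p$.

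The main obstacle is the construction of $G$ itself: one must realize the divergent cuspidal data by a genuinely smooth function that is simultaneously (a) plurisubharmonic and (b) flat to infinite order along $\hat\gamma_\infty$. Flatness along $\hat\gamma_\infty$ forces the leading contribution of $|z_2^2-z_1^3|^2$ along the tailed curve to cancel, so $G$ cannot be taken of the form $|z_2^2-z_1^3|^2+(\text{nonnegative correction})$, nor as a convergent sum of holomorphic squares; either choice would cap the contact along the $\gamma_N$ and render the type finite. Thus positivity and the flat divergent encoding pull against each other, and reconciling them is exactly the delicate step that the subharmonic realization of \cite{FoN20} is designed to handle. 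Adapting that construction from the pure-term one-variable situation to the present mixed, weighted cuspidal situation --- while keeping the Newton facet and the regular type under control --- is where the real work lies.
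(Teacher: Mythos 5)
There is a genuine gap, and you have in fact named it yourself: the central object $G$ is never constructed. Your whole argument funnels through a smooth plurisubharmonic function realizing the divergent mixed expression $\left|z_2^2-z_1^3-z_2\,\eta(z_1)\right|^2$ with $\eta$ a zero-radius series, flat along the divergent formal cusp $\hat\gamma_\infty$. Borel's theorem gives a smooth function with that formal Taylor series, but it gives no control whatsoever on plurisubharmonicity; and the Forn\ae ss--Ninh realization you invoke applies to \emph{one-variable, pure-term} series $2\sum{\rm Re}(c_jz^j)$ with increasing positive coefficients --- it says nothing about two-variable mixed terms with divergent coefficients such as $-2\,{\rm Re}\bigl((z_2^2-z_1^3)\overline{z_2\,\eta(z_1)}\bigr)$. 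You correctly observe that $G$ cannot be $|z_2^2-z_1^3|^2$ plus a nonnegative correction, nor a sum of holomorphic squares, so the tension between positivity and flat divergent encoding is real, and your proposal resolves it only by deferring to ``where the real work lies.'' As a proof, that step fails to exist; everything downstream (the truncated curves $\gamma_N$, the nonexistence argument, the regular-type bound) is conditional on it.

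It is worth seeing how the paper sidesteps this entirely: it takes $F=|z_1^3-z_2^2|^2+f(z_1,\bar z_1)$, leaving the cusp untouched and putting the divergent data into a \emph{separate additive pure-term function of one variable}, exactly where \cite{FoN20} applies, so pseudoconvexity is immediate. The divergence is then matched not against a tail in $z_2$ but against the holomorphic $w$-component of a putative curve: substituting $\gamma(t)=(t^2,\pm t^3+g(t),h(t))$ gives $2\,{\rm Re}\bigl(h(t)+\sum_{j=8}^N c_j t^{2j}\bigr)+|g(t)|^2+O(t^{2N+2})$, and since pure and mixed terms cannot cancel, infinite contact forces $h(t)=-\sum_{j\ge 8}c_jt^{2j}$ formally, contradicting the holomorphy of $h$ (Lemma~6.7). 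This also makes the finiteness of $\Delta_1^{\rm reg}$ a consequence of explicit Newton-polyhedron results from \cite{Kam20} rather than a heuristic about transversality to the facet. Your suspension to $\C^{n+1}$ by adding $\sum_{j\ge 3}|z_j|^2$ matches the paper; but if you want to salvage your route, you would have to either prove the psh realization of the mixed divergent series (a nontrivial new construction), or notice that moving the divergence from the $z_2$-tail of the cusp into an additive $f(z_1,\bar z_1)$ reduces you precisely to the paper's argument.
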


\begin{proof}
Notice that since a desired real hypersurface $M$ 
satisfies that 
$\Delta_1^{\rm reg}(M,p)<\Delta_1(M,p)$, 
there is no ${\mathcal N}$-canonical coordinate for $M$ 
from Theorem~2.3.

First, let us construct a desired real hypersurface $M$ in $\C^3$.
Let $\{c_j\}_{j\in\N}$ 
be an increasing sequence of positive real numbers
such that the power series $\sum_{j=8}^{\infty} c_j z^j$ 
does not converge away from the origin. 
Let $f$ be a real-valued smooth subharmonic function 
defined near the origin 
in $\C$ whose Taylor series is 
$2\sum_{j=8}^{\infty} {\rm Re}(c_j z^j)$
(see Remark~5.8). 
Let us consider the smooth function $F$ 
defined near the origin in $\C^2$ as 
\begin{equation}\label{eqn:6.5}
F(z,\bar{z})=\left|z_1^3-z_2^2\right|^2
+f(z_1,\bar{z}_1).
\end{equation}
Let $M$ be a smooth real hypersurface in $\C^{3}$ 
defined by 
$r(z,w)=2{\rm Re}(w)+F(z,\bar{z})=0$. 
The pseudoconvexity of $M$ is obvious. 

Now, let us consider $\Delta_1(M,0)$ and 
$\Delta_1^{\rm reg}(M,0)$ by using 
the Newton polyhedron of ${\mathcal N}_+(r)$.
It is easy to see 
$$
{\mathcal N}_+(r):=\{\xi\in\R_+^3:
\xi_1/6+\xi_2/4+\xi_3\geq 1\}.
$$
Let $\kappa_0:=
\{(\xi_1,\xi_2,0)\in{\mathcal N}_+(r):
\xi_1/6+\xi_2/4=1\}$, 
which is a compact face of ${\mathcal N}_+(r)$.
Then $F_{\kappa_0}$ is not ${\mathcal N}$-nondegenerate,  
while $F_{\kappa}$'s are  ${\mathcal N}$-nondegenerate
for the other compact faces $\kappa$. 
Therefore, 
Theorems~7.3 and 8.4 in \cite{Kam20}
implies that if ${\gamma}\in{\Gamma}$ does not
take the form 
\begin{equation}\label{eqn:6.6}
{\gamma}(t)=
(t^2,\pm t^3+o(t^3),o(t^{12})),
\end{equation}
then we have 
$$
1\leq 
\frac{{\rm ord}(r\circ{\gamma})}
{{\rm ord}({\gamma})}
\leq 6.
$$
In particular, every ${\gamma}\in{\Gamma}^{\rm reg}$
does not take the form (\ref{eqn:6.6}), so 
it is easy to see $\Delta_1^{\rm reg}(M,0)=6$.
More precisely, we consider the case when 
$\gamma$ takes the form (\ref{eqn:6.6}). 
\begin{lemma}
Let $N$ be an arbitrary integer with $N\geq 10$.
Then the following two conditions are equivalent.
\begin{enumerate}
\item ${\rm ord}(r\circ\gamma)\geq 2N+2$;
\item $\gamma(t)=(t^2,\pm t^3+O(t^{N+1}),
-\sum_{j=8}^N c_j t^{2j} +O(t^{2N+2})).$
\end{enumerate}
\end{lemma}
\begin{proof}
By dividing the function $r$ into a polynomial part 
and the remainder part, 
a defining function for $M$ can be rewritten 
as follows.
\begin{equation}\label{eqn:6.7}
r(z,w)=
2{\rm Re}\left(
w+\sum_{j=8}^N c_j z_1^j
\right)+
\left|z_1^3-z_2^2\right|^2
+R_{N+1}(z_1,\bar{z}_1),
\end{equation}
where 
$R_{N+1}\in C_0^{\infty}(\C)$ with 
${\rm ord}(R_{N+1})\geq N+1$. 

First, we show (i) $\Longrightarrow$ (ii).
It suffices to treat the curves of the form (\ref{eqn:6.6}),
which will be more exactly denoted by  
$\gamma(t)
=(t^2,\pm t^3+g(t),h(t))$ where 
$g\in{\mathcal O}_0(\C)$ with 
${\rm ord}(g)>3$ and 
$h\in{\mathcal O}_0(\C)$ with 
${\rm ord}(h)>12$.
Substituting this $\gamma(t)$ into (\ref{eqn:6.7}),  
we have
\begin{equation}\label{eqn:6.8}
(r\circ\gamma)(t)=
2{\rm Re}\left(h(t)+\sum_{j=8}^N c_j t^{2j}\right)+
|g(t)|^2 +R_{N+1}(t^2,\overline{t^2}).
\end{equation}
Since $R_{N+1}(t^2,\overline{t^2})=O(t^{2N+2})$
and  
the mixed terms and the pure terms cannot be canceled,
$g$ and $h$ must satisfy that  
$g(t)=O(t^{N+1})$ and
$h(t)+\sum_{j=8}^N c_j t^{2j}=O(t^{2N+2})$,
which imply the condition (ii).

Next, we show (ii) $\Longrightarrow$ (i). 
Substituting the equation in (ii) into (\ref{eqn:6.7}),
we can easily see 
${\rm ord}(r\circ\gamma)\geq 2N+2$.
\end{proof}

It follows from the above lemma that
$\Delta_1(M,p)=\infty$.

Now, let us assume that there exists a curve
$\gamma_{\infty}\in\Gamma$ such that 
${\rm ord}(r\circ\gamma_{\infty})\geq N$ 
for every $N\in\N$.
Let $\gamma_{\infty}(t)=:
(\gamma_{1}(t),\gamma_{2}(t),\gamma_{3}(t))$.
Since $\gamma_{\infty}$ satisfies the condition (i)
in Lemma~6.7, 
the condition (ii) implies that 
$\gamma_3(t)=-\sum_{j=8}^N c_j t^{2j} +O(t^{2N+2})$
for every $N\in\N$.
But, $\sum_{j=8}^{\infty} c_j t^{2j}$ does not converge
away from the origin, which is a contradiction to the holomorphy 
of $\gamma_3$.
As a result, we see that 
there exists no $\gamma\in\Gamma$ tangent to 
$M$ at the origin to infinite order.

In higher dimensional case $\C^{n+1}$ 
with $n\geq 3$, 
the following $F$ is considered: 
\begin{equation*}
F(z,\bar{z})=\left|z_1^3-z_2^2\right|^2
+f(z_1,\bar{z}_1)+\sum_{j=3}^n |z_j|^2,
\end{equation*} 
where $f$ is the same as that in (\ref{eqn:6.5}).
It is easy to construct higher dimensional 
hypersurfaces satisfying the properties in 
the theorem by using  $F$ in (\ref{eqn:6.5})
in a similar fashion to the three-dimensional case.
\end{proof}

\section{Open problems}

Theorems 5.5 and 6.3 only treat real hypersurfaces 
of the model form
as in (\ref{eqn:3.2}). 
The following problem is naturally raised. 

\begin{problem}
Let $n\geq 1$ and let $M$ be
a general smooth (pseudoconvex) real hypersurface 
in $\C^{n+1}$ and $p\in M$. 
Give equivalence conditions for 
the existence of $\gamma\in\Gamma$ 
tangent to $M$ at $p$ to infinite order,
analogous to those in Theorems 5.5 and 6.3.
\end{problem}

The second problem is concerned with Theorem~6.6. 

\begin{problem}
Let $n\geq 2$ and let $M$ be
a smooth (pseudoconvex) real hypersurface 
in $\C^{n+1}$ with 
$\Delta_1^{\rm reg}(M,p)<\infty$ for $p\in M$. 
Give equivalence conditions for 
the existence of $\gamma\in\Gamma$ 
tangent to $M$ at $p$ to infinite order.
\end{problem}

Theorem~6.6 only provides a simple example 
to the nonexistence of the desired $\gamma\in\Gamma$.

\vspace{ 2em}

{\bf Acknowledgements.} 

The author greatly appreciates that 
 Ninh Van Thu kindly informed of
important examples of 
real hypersurfaces of infinite type in \cite{FoN20}.
The author also would like to express his sincere
gratitude to the referee for his/her careful reading
of the manuscript and giving the
author valuable comments. 
This work was supported by 
JSPS KAKENHI Grant Numbers JP20K03656, 
JP20H00116.


\end{document}